\documentclass{article}
\usepackage{graphicx,amsthm,amsmath,amsfonts,amssymb,comment,enumitem,color,url,hyperref,rotating} 

\newtheorem{theorem}{Theorem}
\newtheorem{definition}{Definition}
\newtheorem{corollary}{Corollary}
\newtheorem{remark}{Remark}
\newtheorem{proposition}{Proposition}

\DeclareMathOperator{\EE}{\mathbb{E}}
\DeclareMathOperator{\VV}{\mathbb{V}}
\DeclareMathOperator{\Cov}{\text{Cov}}

\title{Data Reuse and the Long Shadow of Error: Splitting, Subsampling, and Prospectively Managing Inferential Errors}
\author{Reid Dale$^1$, Jordan Rodu$^2$, Maria E. Currie$^3$, Mike Baiocchi$^4$}
\date{
	$^1$Stanford University School of Medicine Department of Cardiothoracic Surgery \\ \texttt{reiddale@stanford.edu}\\%
    $^2$University of Virginia Department of Statistics\\
    \texttt{jsr6q@virginia.edu}\\
    $^3$Stanford University School of Medicine Department of Cardiothoracic Surgery \\
    \texttt{mecurrie@stanford.edu}\\
     $^4$Stanford University Department of Epidemiology $\&$ Population Health \\ \texttt{baiocchi@stanford.edu}\\
	\today
}

\begin{document}

\maketitle

\begin{abstract}
When multiple investigators analyze a common dataset, the data reuse induces dependence across testing procedures, affecting the distribution of errors. Existing techniques of managing dependent tests require either cross-study coordination or post-hoc correction. These methods do not apply to the current practice of uncoordinated groups of researchers independently evaluating hypotheses on a shared dataset. We investigate the prospects of using subsampling techniques implemented at the level of individual investigators to remedy dependence with minimal coordination.

To this end, we establish the asymptotic joint normality of test statistics for the class of asymptotically linear test statistics, decomposing the covariance matrix as the product of a data overlap term and a test statistic association term. This decomposition shows that controlling data overlap is sufficient to control dependence, which we formalize through the notion of Expected Variance Ratio.

This enables the closed form derivation of the variance of the joint rejection region under the global null as a function of pairwise correlations of test statistics. We adopt mean-variance portfolio theory to measure risk, defining the Expected Variance Ratio (EVR) as the ratio of the expected variance of the Type I error count to the independent baseline. Familywise error rate is demonstrated to be minimized precisely when this variance is maximized.

We show that data splitting is asymptotically optimal among rules that ensure exact independence, but require coordination. We then use concentration inequalities to establish that subsampling techniques implementable by individual investigators can ensure an EVR close to $1$. 

Finally, we show that such subsampling techniques are able to simultaneously perform a number of tests while ensuring sufficient power and that the bounded EVR is $O\left(\frac{1}{r^2}\right)$ compared to data splitting's $O\left(\frac{1}{r}\right)$, where $r$ is the per-statistic fraction of data required.
\end{abstract}

\newpage

\section{Introduction}

Large-scale registries and public-use datasets have enabled investigators to conduct observational studies at unprecedented scale and efficiency. Common practice is for each investigator to use all observations meeting the inclusion criteria of their study. But when many investigators draw on the same dataset, their test statistics become dependent through overlapping data, and this dependence has consequences for the collective reliability of the resulting body of evidence. This problem is likely to become more acute as AI-assisted scientific workflows increase both the volume and pace at which analyses of shared datasets are produced.

The recognition that data reuse induces dependent testing is not new; recently, this phenomenon has been termed ``dataset decay'' by Thompson et al. \cite{thompson_dataset_2020}. A substantial body of work addresses this dependence in settings where it can be managed through centralized coordination or post-hoc correction. \textit{Within-study} methods include multiple testing adjustments  \cite{benjamini_controlling_1995,holm1979simple}, seemingly unrelated regressions \cite{zellner_efficient_1962}, and sequential testing \cite{johari_always_2019}. These methods handle dependence among tests conducted by a single investigator with knowledge of all analyses being performed. \textit{Centralized} approaches such as data splitting \cite{cox_note_1975}, $\alpha$-spend \cite{demets1994interim}, $\alpha$-investing \cite{foster_investing_2008}, and
the adaptive inference framework of Dwork et al. \cite{dwork_preserving_2015}, require either a data splitting authority or a mechanism for coordinating queries across analysts. \textit{Post hoc} corrections for dependent tests have been developed in the meta-analysis literature \cite{cheung2019guide}, in the context of reused natural experiments \cite{heath_reusing_2023}, and through post-hoc estimates of the false discovery rate \cite{efron_large-scale_2010,efron2010correlated}. None of these approaches, however, are designed for the setting we consider: the \textit{prospective} management of correlation among a large number of uncoordinated investigators, each independently selecting hypotheses and test statistics to evaluate on a common dataset, with no central catalogue of which observations have been used and no requirement that analysts coordinate their designs.

We approach the problem from the perspective of mean-variance portfolio theory \cite{joshi_introduction_2013,markowitz_portfolio_1952}, treating the indicator functions of rejection regions $\mathbf{1}_{R_i}$ of $C$ hypothesis tests as assets in a portfolio and measuring risk by the variance of the total error count $E = \sum_{i=1}^C \mathbf{1}_{R_i}$ under the global null. The key observation motivating this framing is that data reuse does not affect the \textit{expected} number of Type I errors. By linearity of expectation, $\mathbb{E}[E] = C\alpha$ regardless of the dependence structure, but the variance of the error distribution is affected. Independent tests yield $\mathbb{V}(E) = C\alpha(1-\alpha)$, growing linearly in $C$; dependent tests can yield variance growing quadratically in $C$, concentrating probability mass on the event of a catastrophically large number of simultaneous errors. We formalize this via the \textit{Expected Variance Ratio} (EVR), defined as $\text{EVR}(S) = \mathbb{E}_S[\mathbb{V}(E)] / C\alpha(1-\alpha)$, which compares the expected variance under a data allocation procedure $S$ to the independent baseline.

We show, perhaps counterintuitively, that familywise error rate (FWER) is minimized precisely when variance is \textit{maximized}: perfectly correlated tests achieve $\text{FWER} = \alpha$ but $\mathbb{V}(E) = C^2\alpha(1-\alpha)$. This suggests that FWER is the wrong criterion for managing large-scale epistemic risk, as it rewards the concentration of errors.

Our analysis proceeds as follows. We first establish the dependence structure of test statistics computed on overlapping subsets of a shared dataset. Theorem~\ref{thm:asymptotic_linearity} shows that for the broad class of asymptotically linear estimators (including $M$-estimators, $U$-statistics, linear rank statistics, Kaplan-Meier estimators, and Cox regression) the joint distribution of standardized test statistics is asymptotically multivariate normal with covariance $\Sigma_{ij} = \varrho_{ij}\, \mathbb{E}[\psi_i(X)\psi_j(X)]$, decomposing cleanly into an \textit{overlap} component $\varrho_{ij}$ determined by the fraction of shared data and a \textit{statistical association} component $\mathbb{E}[\psi_i(X)\psi_j(X)]$ determined by the relationship between the influence functions of the two estimators. Corollary~\ref{cor:pearson_cor} shows that controlling data overlap is sufficient to control dependence, since the asymptotic Pearson correlation is bounded above by $\varrho_{ij}$.

Under the resulting joint normality, we derive closed-form expressions for the variance of the error count as a function of pairwise correlations (Proposition~\ref{prop:correlated_rejection}) and bound the excess variance contributed by correlation (Corollary~\ref{cor:bounded_correlation_variance}). The excess variance is a sum over pairs of a function $R(\rho_{ij}, c_{\alpha/2})$ that is monotonically increasing in $|\rho_{ij}|$ and subquadratic, growing as $\alpha(1-\alpha)\rho^2$ near the origin.

We then turn to the design of data allocation procedures. In Section~\ref{sec:subsampling}, we show that partitioning the dataset into $C$ disjoint subsets of equal size guarantees independence across tests and achieves the maximin asymptotic relative efficiency of $1/C$ among all partitioning procedures (Proposition~\ref{prop:asymptotic_optimality_of_splitting}). However, data splitting requires centralized coordination: a registry of which observations have been assigned to which study, ensuring no overlaps occur. This makes it ill-suited for the federated setting.

We therefore consider a class of procedures we call \textit{egalitarian subsampling}, in which each investigator independently draws a uniform random subsample of size $r(N) \cdot N$ from the shared dataset (Definition~\ref{def:subsampling_families}). These procedures are implementable without any coordination across investigators. While they do not guarantee independence and are indeed asymptotically suboptimal in terms of relative efficiency when $r(N) \to 0$ (Proposition~\ref{prop:subsampling_suboptimal}), they can nevertheless guarantee bounded $EVR$. Using tail bounds for hypergeometric random variables (Proposition~\ref{prop:subsampling_tail_bound}), we show that the probability of large pairwise correlations decays exponentially in the sample size, allowing us to bound the expected variance (Proposition~\ref{cor:variance_under_random_subsampling}) and hence the EVR (Corollary~\ref{cor:evr_bound}). The bound decomposes into a quadratic term $R(\rho_0, c_{\alpha/2})$ depending on the expected magnitude of pairwise correlations and an exponentially decaying tail contribution from the probability that any pair of tests has an unusually large overlap.

These bounds allow us to characterize the \textit{capacity} of a dataset, which we define as the number of studies $C$ it can sustain at a given EVR tolerance $1+\delta$. Under egalitarian subsampling with fraction $r(N) = b/\sqrt{N}$, Proposition~\ref{prop:egalitarian_evr_inversion} shows that the capacity scales as $O\left(\frac{1}{r(N)^2}\right) = O(N/b^2)$, compared to $O(N/M) = O\left(\frac{1}{r(N)} \right)$ for data splitting (where $M$ is the per-statistic sample size requirement). Since each investigator uses only $b\sqrt{N}$ observations (far less than the full dataset, but enough for adequate power for large $N$) the dataset can support a substantially larger number of analyses under the relaxed requirement of bounded EVR compared to data splitting.

We demonstrate these results in two worked examples in Section~\ref{sec:worked_example}. In Section~\ref{subsec:reused_control}, we revisit the classical problem of reused control groups \cite{dunnett1955multiple}, showing that egalitarian subsampling with $b = 10$ can sustain over three times as many pairwise treatment-control contrasts as data splitting while keeping EVR below $1.1$. In Section~\ref{subsec:sur}, we evaluate the framework in the setting of families of seemingly unrelated regressions under varying correlation structures, confirming that egalitarian subsampling maintains near-independent error variance even when the underlying covariates are highly correlated.

\section{Dependence Structure of Asymptotically Linear Test Statistics Under Data Reuse}

In this section we observe that a wide class of test statistics yield asymptotically multivariate normal distributions. Our result will have two key implications:

\begin{enumerate}
    \item By virtue of being multivariate normal, \textit{asymptotically} the pairwise correlations between test statistics are sufficient to specify the dependence structure on test statistics, and 
    \item The explicit formula for the covariance matrix illustrates exactly how dependence scales with the overlap of data among the constituent test statistics.
\end{enumerate}

Recall the definition of Asymptotic linearity (e.g. section 25.9 of van der Vaart)

\begin{definition}\label{def:asymptotic-linearity}
A sequence of statistics $T^n = T^n(X_1, \ldots, X_n)$ estimating a parameter $\theta$ is \emph{asymptotically linear} with influence function $\psi$ if
\[
T^n - \theta = \frac{1}{n}\sum_{i=1}^n \psi(X_i) + o_p(n^{-1/2})
\]
where $\mathbb{E}[\psi(X)] = 0$ and $\VV[\psi(X)] < \infty$ for each distribution in the family $\Theta$.\footnote{In this manuscript we need only assume that this holds in a local neighborhood of the null parameter $\theta_0 \in \Theta$.}
\end{definition}

A wide class of test statistics commonly used in applied settings by investigators are asymptotically linear including: sample means, U-statistics (\cite{van2000asymptotic}, Proofs of Theorems 12.3 and 12.6) , $M$-estimators (\cite{van2000asymptotic}, Proofs in Section 5.3), permutation tests and linear rank statistics (\cite{van2000asymptotic} Proofs in  Section 13.5), Kaplan-Meier estimators \cite{cai1998asymptotic}, and the Cox proportional hazards model. \cite{chen_iidcox}. By contrast, $\chi^2$ test statistics are not asymptotically linear.

Crucially, asymptotically linear test statistics on overlapping data admit a central limit theorem.

\begin{theorem}\label{thm:asymptotic_linearity}
For each $N \geq 1$, let $X_1, \ldots, X_N$ be iid. Let $C \in \mathbb{N}^{>0}$ be the number of test statistics. For $i = 1, \ldots, C$, let $D_i^{(N)} \subseteq [N]$ with $|D_i^{(N)}| = n_i^{(N)}$, and let
\[
T_i^{(N)} = T_i\!\left((X_j)_{j \in D_i^{(N)}}\right)
\]
be a statistic computed on sample $D_i^{(N)}$, estimating a parameter $\theta_i$, and asymptotically linear with influence function $\psi_i$:

Define $r_i(N) = \frac{n_i^{(N)}}{N}$ and $\omega_{ij}(N) = \frac{|D_i^{(N)} \cap D_j^{(N)}|}{N}$. As $N \to \infty$, assume that $Nr_{i}(N) \to \infty$ and that 
\begin{align} \frac{\omega_{ij}(N)}{\sqrt{r_i(N)r_j(N)}}  \rightarrow \varrho_{ij}
\end{align}
Define the standardized statistics
\[
S_i^{(N)} = \sqrt{n_i^{(N)}}\bigl(T_i^{(N)} - \theta_i\bigr).
\]

Then:
\[
\bigl(S_1^{(N)}, \ldots, S_C^{(N)}\bigr) \;\xrightarrow{d}\; \mathcal{N}_C(0, \Sigma)
\]
where $\Sigma \in \mathbb{R}^{C \times C}$ is the matrix with entries
\begin{equation}\label{eq:sigma_entries}
\Sigma_{ij} = \varrho_{ij} \mathbb{E}[\psi_i(X)\psi_j(X)]  \text{for all } 1 \leq i, j \leq C.
\end{equation}
\end{theorem}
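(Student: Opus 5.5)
The plan is to reduce the joint convergence to a multivariate central limit theorem for sums of independent vectors, using the asymptotic linearity of each $T_i^{(N)}$ and then the Cramér--Wold device. First, by Definition~\ref{def:asymptotic-linearity} applied to $T_i$ on the sample $D_i^{(N)}$ of size $n_i^{(N)} = N r_i(N) \to \infty$, we write
\[
S_i^{(N)} = \frac{1}{\sqrt{n_i^{(N)}}}\sum_{k \in D_i^{(N)}} \psi_i(X_k) + o_p(1).
\]
By Slutsky's lemma (a finite vector of $o_p(1)$ terms converges to zero jointly), it suffices to prove the claim for the linear parts $L_i^{(N)} = n_i^{-1/2}\sum_{k\in D_i}\psi_i(X_k)$.

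Next we fix $a \in \mathbb{R}^C$ and consider $\sum_i a_i L_i^{(N)} = \sum_{k=1}^N Y_{N,k}$, where
\[
Y_{N,k} = \sum_{i=1}^C \frac{a_i}{\sqrt{n_i^{(N)}}}\,\mathbf{1}\{k \in D_i^{(N)}\}\,\psi_i(X_k).
\]
For fixed $N$ the $Y_{N,k}$ are independent across $k$ and have mean zero, since the sets $D_i^{(N)}$ are deterministic. We compute the variance by expanding and grouping by pairs $(i,j)$. The number of $k$ with $k\in D_i\cap D_j$ is $N\omega_{ij}(N)$, so
\[
\VV\Bigl(\sum_k Y_{N,k}\Bigr) = \sum_{i,j} a_i a_j \frac{N\omega_{ij}(N)}{\sqrt{n_i n_j}}\,\EE[\psi_i\psi_j] = \sum_{i,j} a_i a_j \frac{\omega_{ij}(N)}{\sqrt{r_i(N) r_j(N)}}\EE[\psi_i\psi_j] \to a^\top \Sigma a.
\]
For $i=j$ this uses $\omega_{ii}=r_i$, so $\varrho_{ii}=1$. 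If $a^\top\Sigma a = 0$ the sum tends to $0$ in $L^2$ and the claim is trivial. Otherwise we apply the Lindeberg--Feller CLT to the triangular array $\{Y_{N,k}\}$, and Cramér--Wold then yields the joint limit $\mathcal N_C(0,\Sigma)$.

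The main obstacle is verifying the Lindeberg condition, since the summands are not identically distributed: each $k$ belongs to a different subcollection of the $D_i$. The approach is to bound
\[
Y_{N,k}^2 \le C \sum_i \frac{a_i^2}{n_i}\,\mathbf{1}\{k\in D_i\}\,\psi_i(X_k)^2
\]
by Cauchy--Schwarz. Then
\[
\sum_k \EE\bigl[Y_{N,k}^2\mathbf 1\{|Y_{N,k}|>\epsilon\}\bigr] \le C\sum_i a_i^2\, \EE\Bigl[\psi_i(X)^2 \mathbf 1\Bigl\{\max_j |a_j\psi_j(X)|/\sqrt{\min_j n_j} > \epsilon/C\Bigr\}\Bigr].
\]
Here we use that each $i$ has exactly $n_i$ indices and that $|Y_{N,k}| \le \sum_j |a_j\psi_j(X_k)|/\sqrt{n_j}$. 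Because $\min_j n_j^{(N)} \to \infty$ and each $\psi_i(X)^2$ is integrable, dominated convergence sends this bound to $0$.

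A secondary point needing care is that the $o_p(n^{-1/2})$ remainder in Definition~\ref{def:asymptotic-linearity} must be valid along the sequence of sample sizes $n_i^{(N)}$. This holds because $X_k$, $k\in D_i^{(N)}$, are iid with the same law for every choice of index set, so the remainder's distribution depends only on $n_i^{(N)}$, and $n_i^{(N)} \to\infty$ by assumption.
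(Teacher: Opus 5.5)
Your proposal is correct and follows the same route as the paper's proof. Like the paper, you pass to the linear parts via Slutsky, compute $\Cov(L_i^{(N)},L_j^{(N)}) = \frac{|D_i^{(N)}\cap D_j^{(N)}|}{\sqrt{n_i^{(N)} n_j^{(N)}}}\EE[\psi_i\psi_j]$, and conclude with Cram\'er--Wold and Lindeberg--Feller; your Cauchy--Schwarz and dominated-convergence check of the Lindeberg condition, and your remark that the remainder is controlled along $n_i^{(N)}\to\infty$, are more careful than the paper, which treats the summands as bounded although $\psi_i$ is only assumed square-integrable.
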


The proof of this theorem is in Appendix \ref{proof:proof_of_thm:asymptotic_linearity} 

\begin{remark}
The components of the covariance matrix $\Sigma_{ij}$ have an important interpretation. The formula $\Sigma_{ij} = \varrho_{ij} \mathbb{E}[\psi_i(X)\psi_j(X)]$ decomposes the dependence between the tests into two components: 
\begin{enumerate} 
\item the component $\mathbb{E}[\psi_i(X)\psi_j(X)]$ which measures the strength of association between the statistics $\psi_i$ and $\psi_j$ and therefore between $T_i$ and $T_j$, and 
\item the component $\varrho_{ij} = \lim\limits_{N\to\infty} \frac{\omega_{ij}(N)}{\sqrt{r_i(N)\, r_j(N)}}$ related to the extent of overlap between the two samples.
\item Moreover, the sign of $\Sigma_{ij}$ is either $0$ or the sign of $\mathbb{E}[\psi_i(X)\psi_j(X)]$. Thus, modifying the design of $\varrho_{ij}$ can never cause the sign to change from $\pm 1$ to $\mp 1$, but can force it to be $0$ through enforcing disjointness.
\end{enumerate}
\end{remark}
Thus, a \textit{sufficient} condition to reduce covariance is to reduce the term $\frac{\omega_{ij}}{\sqrt{r_i\, r_j}}$ via subsampling procedures. 

This theorem has several important corollaries for managing dependence. First, asymptotically, it is \textit{only} the fraction of the total sample $r_i$ and the overlap rates $\omega_{ij}$ and the influence functions $\psi_i$ that determine the dependence structure across tests.  Higher-order overlaps $|D_{i_1} \cap \cdots \cap D_{i_r}|$ for $r \geq 3$ do not appear since multivariate normal distribution is determined by its second moments. In Appendix \ref{sec:cumulants} we remark how the higher cumulants of $X$ appear for finite samples.

Moreover,
\begin{corollary} \label{cor:pearson_cor}
The asymptotic Pearson correlation of asymptotically linear test statistics $T_i$ with associated influence functions $\psi_i$ is bounded above by 

\begin{align}
\rho(T_i,T_j) \leq \frac{\omega_{ij}}{\sqrt{r_ir_j}} \frac{\mathbb{E}[\psi_i\psi_j]}{\sqrt{\mathbb{E}[\psi_i^2]\mathbb{E}[\psi_i^2]}} \leq  \frac{\omega_{ij}}{\sqrt{r_ir_j}}.
\end{align}
\end{corollary}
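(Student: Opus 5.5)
The plan is to read the corollary off Theorem~\ref{thm:asymptotic_linearity} and then apply Cauchy--Schwarz. In the theorem's notation, the asymptotic covariance of the standardized statistics is $\Sigma_{ij} = \varrho_{ij}\,\mathbb{E}[\psi_i\psi_j]$. For the diagonal entries take $i=j$: then $D_i\cap D_i = D_i$, so $\omega_{ii}/r_i = 1$ and $\varrho_{ii}=1$, giving $\Sigma_{ii} = \mathbb{E}[\psi_i^2]$. Pearson correlation is invariant under the positive affine rescaling $T_i \mapsto \sqrt{n_i}(T_i-\theta_i)$, so the asymptotic correlation of $T_i$ and $T_j$ equals that of the limiting Gaussian coordinates. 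It is therefore
\[
\rho(T_i,T_j) = \frac{\Sigma_{ij}}{\sqrt{\Sigma_{ii}\Sigma_{jj}}} = \varrho_{ij}\,\frac{\mathbb{E}[\psi_i\psi_j]}{\sqrt{\mathbb{E}[\psi_i^2]\,\mathbb{E}[\psi_j^2]}}.
\]
Here $\varrho_{ij}$ is the limit of $\omega_{ij}/\sqrt{r_ir_j}$. This gives the first (in)equality, which is really an equality in the limit. I read the second $\psi_i^2$ in the statement's denominator as a typo for $\psi_j^2$.

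Write $\kappa_{ij} = \mathbb{E}[\psi_i\psi_j]/\sqrt{\mathbb{E}[\psi_i^2]\mathbb{E}[\psi_j^2]}$. This is the correlation of $\psi_i(X)$ and $\psi_j(X)$, since both have mean zero. By Cauchy--Schwarz, $|\kappa_{ij}|\le 1$.

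Next I need $\varrho_{ij}\ge 0$, which holds because $\omega_{ij}\ge 0$. I also need $\varrho_{ij}\le 1$. For that, note $|D_i\cap D_j|\le \min(n_i,n_j)\le\sqrt{n_in_j}$, so $\omega_{ij}/\sqrt{r_ir_j}\le 1$. Combining, $\rho \le \varrho_{ij}|\kappa_{ij}| \le \varrho_{ij}$, which is the second inequality. The same argument bounds $|\rho|$, which is the form actually useful later.

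The main obstacle is interpretive rather than technical. Passing from convergence in distribution to a statement about the Pearson correlation of the actual statistics needs uniform integrability of the $S_i^{(N)}$. I would avoid this by defining ``asymptotic Pearson correlation'' as the correlation of the limiting normal law, which matches the paper's usage. Alternatively, one can compute directly with the linear terms $\frac{1}{\sqrt{n_i}}\sum_{k\in D_i}\psi_i(X_k)$. Their exact covariance is $\frac{|D_i\cap D_j|}{\sqrt{n_in_j}}\mathbb{E}[\psi_i\psi_j] = \frac{\omega_{ij}}{\sqrt{r_ir_j}}\mathbb{E}[\psi_i\psi_j]$, which gives the bound at finite $N$ for the linearized statistics.

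A degenerate case remains. If $\mathbb{E}[\psi_i^2]=0$, the correlation is undefined, and I would simply exclude it.
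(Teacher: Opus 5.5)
Your proposal is correct and takes the same route as the paper. The paper gives no separate proof: the corollary is read off the covariance formula $\Sigma_{ij}=\varrho_{ij}\mathbb{E}[\psi_i\psi_j]$ in Theorem~\ref{thm:asymptotic_linearity} (with $\Sigma_{ii}=\mathbb{E}[\psi_i^2]$), followed by Cauchy--Schwarz and $\varrho_{ij}\ge 0$, and your finite-$N$ calculation with the linearized statistics is exactly the covariance computation in that theorem's proof. You are right that the second $\psi_i^2$ in the denominator should be $\psi_j^2$; the bound $\varrho_{ij}\le 1$ is true but not needed for the stated chain.
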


This corollary entails that a sampling technique that constrains $\omega_{ij}/\sqrt{r_i r_j}\to 0$ is sufficient to asymptotically control the dependence across tests.

\section{Asymptotic Implications for Mean-Variance Analysis of Error Distributions}

The joint normality established in Theorem~\ref{thm:asymptotic_linearity} has direct consequences for the distribution of the total error count. Suppose that $C$ two-sided tests are performed on a dataset at fixed level $\alpha$. Thus, the rejection events $R_i = \{|T_i| > c_{\alpha/2}\}$ are asymptotically determined by a multivariate normal vector with known covariance by Theorem \ref{thm:asymptotic_linearity}.

\subsection{Variance in the Distribution of Type I Errors under Nontrivial Dependence}

Under the global null with each test at level $\alpha$, the count of errors is given by the sum of the indicator functions for the rejection regions $R_i$. Consequently, if each test is performed at exact level $\alpha$, the \textit{expected} number of errors is by linearity of expectation
\begin{align}
\mathbb{E}[E] & = \mathbb{E}\left[\sum\limits_{i=1}^C \mathbf{1}_{R_i}\right] \\
&= C\alpha.
\end{align}
regardless of the dependence structure between the $R_i$. Thus, there is no difference in the \textit{mean} of the error distribution across possible dependence structures on $T_i$.

Our asymptotic results allow us to express the variance of this quantity in a closed form. Assume throughout this section that the test statistics $T_i$ are normalized to the standard Gaussian $\mathcal{N}(0,1)$, i.e. that they are $z$-statistics.

In particular, the variance of the Type I error count $E = \sum\limits_{i=1}^C \mathbf{1}_{R_i}$ satisfies

\begin{align}
\VV(E) & =  \underbrace{\sum\VV(\mathbf{1}_{R_i})}_{C\alpha(1-\alpha)} + 2\sum_{i< j} \underbrace{\Cov(\mathbf{1}_{R_i},\mathbf{1}_{R_j})}_{= \mathbb{P}(R_i\cap R_j)- \alpha^2}\\
& = C\alpha(1-\alpha) + 2\sum\limits_{i< j}[\mathbb{P}(R_i\cap R_j)-\alpha^2] \\
& \leq C^2\alpha(1-\alpha).
\end{align}

Observe that since $\mathbb{P}(R_i \cap R_j)\geq \alpha$, variance is maximized when $R_i$ and $R_j$ differ by a set of probability $0$: $\mathbb{P}[R_i \Delta R_j] = 0$. In that case the variance is equal to $\mathbb{V}(E) = C^2\alpha(1-\alpha)$.

To make this more practical, we need to compute $\mathbb{P}(R_i\cap R_j)$. Under normality, we can do so.\footnote{Assume that every normal test statistic is rescaled to be $\sim \mathcal{N}(0,1)$ under the null.}

\begin{proposition}\label{prop:correlated_rejection}
Let $T_i, T_j$ be jointly normal with unit variances and correlation 
$\rho_{ij} \in (-1,1)$. Let $R_i = \{|T_i| > c_{\alpha/2}\}$ and 
$R_j = \{|T_j| > c_{\alpha/2}\}$ be two-sided rejection events at 
level $\alpha$. Then:

\begin{enumerate}[label=\alph*)]
    \item The probability of pairwise joint rejection is
    \begin{align}\label{eq:joint_rejection}
    \mathbb{P}(R_i \cap R_j) 
    & = 4 - 8\Phi(c_{\alpha/2}) 
      + 2\Phi_{\rho_{ij}}(c_{\alpha/2}, c_{\alpha/2}) 
      + 2\Phi_{-\rho_{ij}}(c_{\alpha/2}, c_{\alpha/2}) \\
      & = \alpha^2 
      + 2\left(\Phi_{\rho_{ij}}(c_{\alpha/2}, c_{\alpha/2}) 
              + \Phi_{-\rho_{ij}}(c_{\alpha/2}, c_{\alpha/2}) 
              - 2\Phi(c_{\alpha/2})^2\right).
    \end{align}
    where $\Phi_{\rho}(x,y)$ denotes the CDF of the standard 
    bivariate normal with correlation $\rho$.
    
    \item $\mathbb{P}(R_i \cap R_j)$ is strictly increasing in 
    $\rho_{ij}$ on $(0,1)$.
    
    \item If $\rho_{ij} > 0$,
    \begin{align}
    \mathrm{Cov}(\mathbf{1}_{R_i}, \mathbf{1}_{R_j})> 0.
    \end{align}
\end{enumerate}
\end{proposition}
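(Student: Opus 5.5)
For part (a), I would decompose the joint rejection event by the signs of the two statistics. Write $c = c_{\alpha/2}$. The event $R_i \cap R_j$ is the disjoint union of the four quadrant events $\{T_i > c, T_j > c\}$, $\{T_i < -c, T_j < -c\}$, $\{T_i > c, T_j < -c\}$ and $\{T_i < -c, T_j > c\}$. The vector $(T_i,T_j)$ is centered jointly normal, so it has the same law as $(-T_i,-T_j)$. Hence the first two events have equal probability, as do the last two. Replacing $T_j$ by $-T_j$ turns correlation $\rho$ into $-\rho$, so the mixed-sign events reduce to same-sign events at correlation $-\rho_{ij}$.

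Inclusion--exclusion on the upper orthant gives $\mathbb{P}(T_i>c,T_j>c) = 1 - 2\Phi(c) + \Phi_{\rho}(c,c)$. Summing the four contributions yields
\begin{align*}
2\bigl(1-2\Phi(c)+\Phi_{\rho_{ij}}(c,c)\bigr) + 2\bigl(1-2\Phi(c)+\Phi_{-\rho_{ij}}(c,c)\bigr),
\end{align*}
which is the first displayed line of (a). For the second form, I would use $\alpha = 2(1-\Phi(c))$, so that $\alpha^2 = 4 - 8\Phi(c) + 4\Phi(c)^2$, and then rearrange algebraically. As a sanity check, at $\rho_{ij}=0$ we have $\Phi_0(c,c)=\Phi(c)^2$, so the expression collapses to $\alpha^2$.

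For part (b), I would differentiate in $\rho$ using Plackett's identity $\partial_\rho \Phi_\rho(x,y) = \varphi_\rho(x,y)$, the bivariate normal density. Since $\Phi_{-\rho}$ depends on $\rho$ through $-\rho$, the chain rule gives
\begin{align*}
\frac{d}{d\rho}\mathbb{P}(R_i\cap R_j) = 2\bigl(\varphi_\rho(c,c) - \varphi_{-\rho}(c,c)\bigr).
\end{align*}
Writing out the density,
\begin{align*}
\varphi_{\pm\rho}(c,c) = \frac{1}{2\pi\sqrt{1-\rho^2}}\exp\!\left(-\frac{c^2}{1\pm\rho}\right).
\end{align*}
The two prefactors agree, and for $\rho\in(0,1)$ and $c>0$ we have $c^2/(1+\rho) < c^2/(1-\rho)$. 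Therefore the derivative is strictly positive, which gives strict monotonicity on $(0,1)$.

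I expect the main obstacle to be justifying Plackett's identity, or an equivalent, rigorously. I would either cite it, or derive it from the heat-equation property $\partial_\rho \varphi_\rho = \partial_x\partial_y \varphi_\rho$ and integrate over $(-\infty,x]\times(-\infty,y]$. This requires differentiating under the integral, which dominated convergence justifies for $\rho$ in compact subsets of $(-1,1)$.

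For part (c), note that $\mathrm{Cov}(\mathbf{1}_{R_i},\mathbf{1}_{R_j}) = \mathbb{P}(R_i\cap R_j)-\alpha^2$. At $\rho=0$ the two statistics are independent, so this covariance is $0$, consistent with the check above. The function $\rho\mapsto\mathbb{P}(R_i\cap R_j)$ is continuous on $[0,1)$ and strictly increasing on $(0,1)$ by (b). Hence for any $\rho_{ij}>0$ it exceeds its value $\alpha^2$ at $0$, and the covariance is strictly positive.
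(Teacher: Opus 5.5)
Your proposal is correct and follows essentially the same route as the paper. For (a) you use the four-quadrant decomposition with sign symmetry and inclusion--exclusion; for (b) you differentiate in $\rho$ via the bivariate normal density at $(c,c)$ and compare $e^{-c^2/(1+\rho)}$ with $e^{-c^2/(1-\rho)}$; and for (c) you use monotonicity together with the value $\alpha^2$ at $\rho=0$. Your explicit appeal to Plackett's identity and to continuity at $\rho=0$ makes rigorous two steps that the paper's proof uses only implicitly.
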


The proof of this proposition appears in Appendix \ref{proof:proof_of_prop:correlated_rejection}

When the tests are independent, then the variance is $C\alpha(1-\alpha)$. When the covariance matrix has $\Sigma_{ij}\geq 0$ for all $i,j$ (as in the case of the reuse of control group in two-sided sampling), variance will increase as $\mathbb{P}(R_i\cap R_j)\geq \alpha^2$.

As a corollary, we can show that if pairwise correlations are bounded above by $0 \leq \rho_{ij}\leq \rho_0$ that the increase in variance is bounded.

\begin{corollary}\label{cor:bounded_correlation_variance}
Suppose that all $(T_i)$ is jointly normal with unit variances and covariance matrix $\Sigma_{ij} = \rho_{ij}$ such that $0 \leq \rho_{ij}\leq \rho_0$ for $i\neq j$. Then the variance in the Type I error count $\mathbb{V}(E)$ under the global null is at most 

\begin{align}\label{eq:variance_excess_bound}
    \VV(E) \leq \underbrace{C\alpha(1-\alpha)}_{\sum\mathbb{V}(\mathbf{1}_{R_i})} + \underbrace{C(C-1)\, R(\rho_0, c_{\alpha/2})}_{\geq 2\sum\limits_{i<j} \Cov(\mathbf{1}_{R_i},\mathbf{1}_{R_j})}
    \end{align}
    where 
    \begin{align}
    R(\rho, c_{\alpha/2}) 
    & = 2\left(\Phi_{\rho}(c_{\alpha/2}, c_{\alpha/2}) 
             + \Phi_{-\rho}(c_{\alpha/2}, c_{\alpha/2}) 
             - 2\Phi(c_{\alpha/2})^2\right) \\
    & = \mathbb{P}[R_i\cap R_j] - \alpha^2.
    \end{align}
\end{corollary}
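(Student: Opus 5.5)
The plan is to start from the variance decomposition already displayed before Proposition~\ref{prop:correlated_rejection}:
\[
\VV(E) = C\alpha(1-\alpha) + 2\sum_{i<j}\bigl[\mathbb{P}(R_i\cap R_j)-\alpha^2\bigr].
\]
Under the global null with unit variances, each rejection event $R_i=\{|T_i|>c_{\alpha/2}\}$ has probability exactly $\alpha$. So the first term is fixed, and everything reduces to bounding each pairwise covariance term $\mathbb{P}(R_i\cap R_j)-\alpha^2$ uniformly by $R(\rho_0,c_{\alpha/2})$. There are $\binom{C}{2}$ pairs and a factor of $2$, which gives exactly $C(C-1)R(\rho_0,c_{\alpha/2})$.

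For a fixed pair, the bivariate marginal of $(T_i,T_j)$ is standard bivariate normal with correlation $\rho_{ij}$. This is the only information about the joint law that matters, since the event $R_i\cap R_j$ depends only on this pair. Part (a) of Proposition~\ref{prop:correlated_rejection} then gives
\[
\mathbb{P}(R_i\cap R_j)-\alpha^2 = R(\rho_{ij},c_{\alpha/2}),
\]
which also justifies the second expression in the definition of $R$.

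The key step is monotonicity. Part (b) of Proposition~\ref{prop:correlated_rejection} states that $\mathbb{P}(R_i\cap R_j)$, and hence $R(\cdot,c_{\alpha/2})$, is strictly increasing on $(0,1)$. I need to extend this to the closed range $0\le\rho_{ij}\le\rho_0$:
\begin{itemize}
\item At $\rho=0$, $R(0,c_{\alpha/2})=0$ by independence, and $R$ is continuous in $\rho$ because $\Phi_\rho$ is continuous in $\rho$. Together with strict increase on $(0,1)$, this gives $R(\rho_{ij},c_{\alpha/2})\le R(\rho_0,c_{\alpha/2})$ whenever $0\le\rho_{ij}\le\rho_0<1$.
\item The degenerate case $\rho_0=1$ needs a separate argument. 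The bound can be read as the continuous extension of $R$, whose limit is $\alpha-\alpha^2$. This dominates every covariance term, since $\mathbb{P}(R_i\cap R_j)\le\alpha$.
\end{itemize}
Summing the pairwise bounds completes the argument.

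The main obstacle is making sure part (b) really covers the endpoints and the boundary case $\rho_0=1$. If part (b) is proved via Plackett's identity, $\partial_\rho\Phi_\rho(x,y)=\varphi_\rho(x,y)$, the derivative of $R$ is $2\bigl(\varphi_\rho(c,c)-\varphi_{-\rho}(c,c)\bigr)$. This is nonnegative for $\rho\ge0$ because $\exp\!\bigl(-c^2/(1+\rho)\bigr)\ge\exp\!\bigl(-c^2/(1-\rho)\bigr)$. I would cite that derivative computation directly to handle the closed interval cleanly.
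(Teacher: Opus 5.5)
Your proposal is correct and is essentially the argument the paper relies on. The paper gives no separate proof of this corollary, but it uses the same monotonicity step in the proof of Proposition~\ref{cor:variance_under_random_subsampling}: split $\mathbb{V}(E)$ into $C\alpha(1-\alpha)$ plus the pairwise covariances, identify each covariance as $R(\rho_{ij},c_{\alpha/2})$ via Proposition~\ref{prop:correlated_rejection}(a), bound it by $R(\rho_0,c_{\alpha/2})$ using part (b), and sum over the $\binom{C}{2}$ pairs. Your explicit handling of the endpoints $\rho_{ij}=0$ (by continuity) and $\rho_0=1$ (where $R(1,c_{\alpha/2})=\alpha(1-\alpha)$) fills in a step the paper leaves implicit.
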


Thus, reducing positive dependence among tests by reducing covariance reduces the \textit{risk}---as measured by variance---in the Type I error distribution. In the extreme case of independent tests, $\mathbb{V}(E) = C\alpha(1-\alpha)$ scales linearly in $C$, while dependent tests have variance that scales \textit{quadratically} in $C$.

Moreover, the function $R(\rho,c_{\alpha/2})$ is bounded above by $\alpha(1-\alpha)\rho^2$ and is therefore subquadratic\footnote{Since $\frac{dR(\rho,c_{\alpha/2})}{d\rho}  = 2[\phi_{\rho}(c_{\alpha/2},c_{\alpha/2})-\phi_{-\rho}(c_{\alpha/2},c_{\alpha/2})]$ vanishes, the leading term in the Taylor series is quadratic of order $\rho^2$ with coefficient $\alpha(1-\alpha)$.}, as seen in Figure \ref{fig:subquadratic}

\begin{sidewaysfigure}[b]  
    \centering    \includegraphics[width=1.0\textwidth]{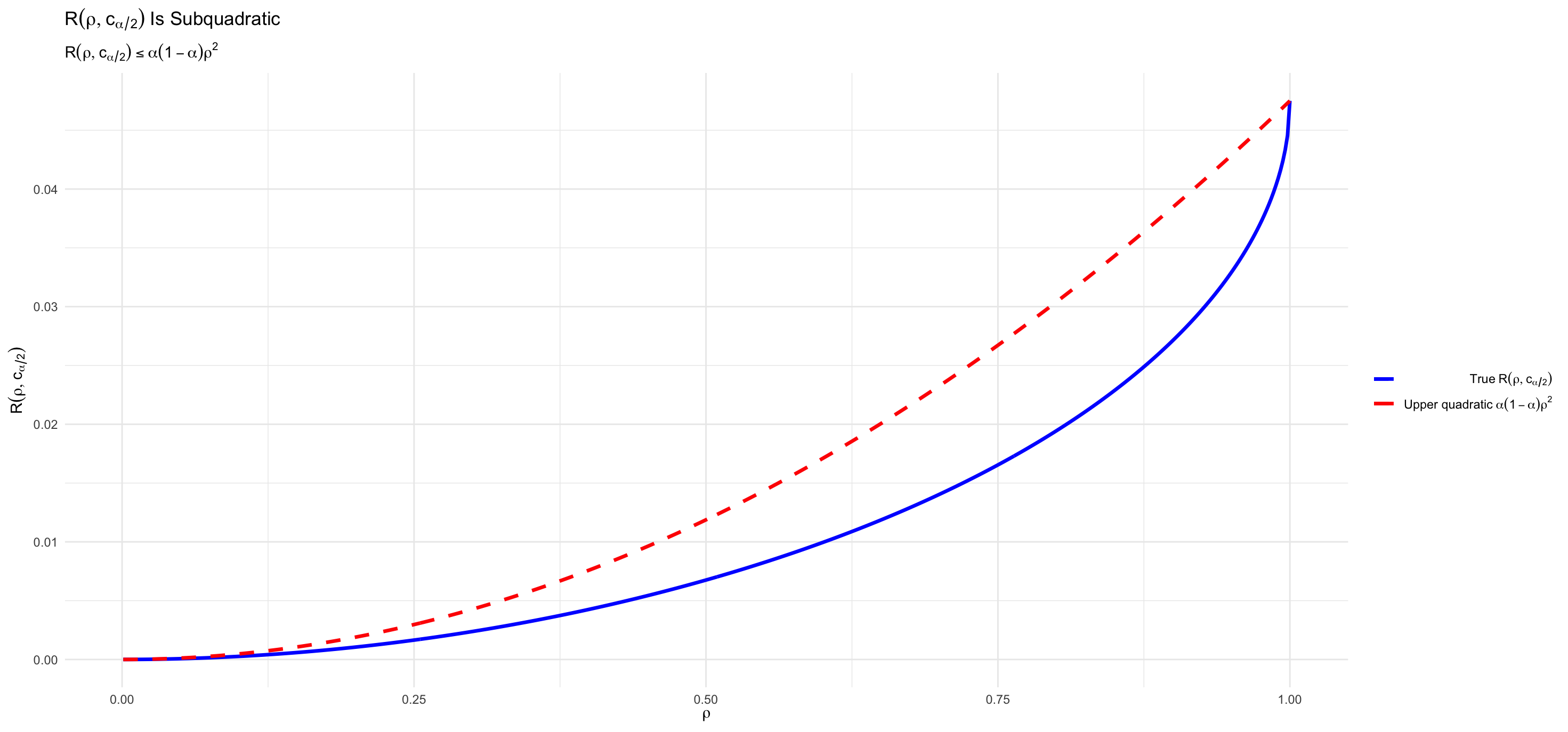}
    \caption{Subquadratic Growth of $R(\rho,c_{\alpha/2})$ in $\rho$}
    \label{fig:subquadratic}
\end{sidewaysfigure}

\subsection{Minimizing FWER Maximizes Expected Variance in the Error Distribution}

This result stands in contrast to that of familywise error rate, which is \textit{minimized} by increasing the variance of the Type I error distribution under the global null. Familywise error rate has already been criticized as being the wrong criteria by Efron \cite{efron_large-scale_2010} since it requires a low probability of a \textit{fixed} number of errors. Our objection stems from the fact that even for relatively small numbers of tests, familywise error control is optimized under the riskiest designs. This fact is implicitly acknowledged in the discussion of FWER in Fay \& Brittain Section 13.1 \cite{fay_statistical_2022}. More precisely, observe that

\begin{proposition}
Let $T_i$ be test statistics with rejection regions $R_i$. Then
\begin{align}
FWER & = \mathbb{P}\left( \bigcup R_i \right) \\
& \geq \alpha
\end{align}
with equality if and only if 
\begin{align}
\mathbb{P}(R_i \triangle R_j) =0
\end{align}
In particular, this occurs under the global null for tests $T_i \sim \mathcal{N}(0,1)$ if and only if $\rho_{ij} \equiv 1$.
\end{proposition}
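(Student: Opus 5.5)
The plan is to treat the statement as elementary measure theory. We read it under the implicit hypothesis that each test has exact level $\alpha$, i.e.\ $\mathbb{P}(R_i)=\alpha$ for every $i$. The inequality is then immediate from monotonicity: since $R_1\subseteq\bigcup_i R_i$,
\begin{align*}
\mathrm{FWER}=\mathbb{P}\Bigl(\bigcup_i R_i\Bigr)\geq \mathbb{P}(R_1)=\alpha .
\end{align*}

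For the equality characterization, first take the case $\mathbb{P}(R_i\triangle R_j)=0$ for all pairs. Then every $R_i$ agrees with $R_1$ up to a null set. Hence $\bigcup_i R_i\subseteq R_1\cup\bigcup_i (R_i\setminus R_1)$, and a finite union of null sets is null, so $\mathbb{P}(\bigcup_i R_i)=\mathbb{P}(R_1)=\alpha$. Conversely, suppose $\mathrm{FWER}=\alpha$. For any $i$ we have $R_i\subseteq\bigcup_k R_k$, and both sets have probability $\alpha$, so $\mathbb{P}\bigl(\bigcup_k R_k\setminus R_i\bigr)=0$. In particular $\mathbb{P}(R_j\setminus R_i)=0$ for every $j$. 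Exchanging the roles of $i$ and $j$ gives $\mathbb{P}(R_i\setminus R_j)=0$, and adding the two yields $\mathbb{P}(R_i\triangle R_j)=0$.

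For the Gaussian specialization, $R_i=\{|T_i|>c_{\alpha/2}\}$ with $(T_i,T_j)$ bivariate standard normal with correlation $\rho_{ij}$. We have $\mathbb{P}(R_i\triangle R_j)=2\alpha-2\mathbb{P}(R_i\cap R_j)$, so the symmetric difference is null if and only if $\mathbb{P}(R_i\cap R_j)=\alpha$.
\begin{itemize}
\item If $\rho_{ij}=1$, then $T_i=T_j$ almost surely and the condition holds.
\item If $|\rho_{ij}|<1$, the pair has a strictly positive density on $\mathbb{R}^2$. The set $\{|t_i|>c,\ |t_j|\leq c\}$ has positive Lebesgue measure, so $\mathbb{P}(R_i\setminus R_j)>0$ and equality fails.
\end{itemize}
Proposition~\ref{prop:correlated_rejection} is consistent with this: by part (b) the joint rejection probability is strictly increasing in $\rho_{ij}$ on $(0,1)$. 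The positive-density argument alone settles the case, however, and it also covers $\rho_{ij}\le 0$.

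The main obstacle is the boundary case $\rho_{ij}=-1$. There $T_j=-T_i$ almost surely, and since the rejection regions are two-sided, $R_i=R_j$, so equality also holds. The precise statement should therefore read $|\rho_{ij}|=1$, or equivalently $T_i=\pm T_j$ almost surely, unless one-sided tests are intended. I would flag this correction and otherwise prove the statement in the $|\rho_{ij}|=1$ form. I would also state explicitly the exact-level hypothesis $\mathbb{P}(R_i)=\alpha$, which is implicit in the paper's setting.
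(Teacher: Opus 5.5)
The paper gives no proof of this proposition: it is introduced with ``observe that'', and nothing in the appendix addresses it. There is therefore no argument to compare yours against, and your proof is correct and fills the gap. Each step is sound. The monotonicity bound is immediate. The characterization of $\mathrm{FWER}=\alpha$ by $\mathbb{P}(R_i\triangle R_j)=0$ for all pairs is right, and it genuinely needs your exact-level hypothesis $\mathbb{P}(R_i)=\alpha$. The positive-density argument for $|\rho_{ij}|<1$ also works. That density argument is more economical than going through Proposition~\ref{prop:correlated_rejection}(b), which only covers $\rho_{ij}\in(0,1)$ and would still need a limiting argument at $\rho_{ij}=1$.

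Your correction at $\rho_{ij}=-1$ is right, and the paper's own formula \eqref{eq:joint_rejection} confirms it. That expression is even in $\rho_{ij}$, so $\mathbb{P}(R_i\cap R_j)\to\alpha$ as $\rho_{ij}\to-1$, exactly as when $\rho_{ij}\to 1$. For two-sided tests the clause should therefore read $|\rho_{ij}|=1$.

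You should state one more hypothesis explicitly, next to exact level: the Gaussian clause needs each pair $(T_i,T_j)$ to be jointly normal. The statement only says $T_i\sim\mathcal{N}(0,1)$, and marginal normality is not enough. Take $0\le a<c_{\alpha/2}$, and set $T_j=-T_i$ on $\{a\le|T_i|\le c_{\alpha/2}\}$ and $T_j=T_i$ elsewhere. Then $T_j\sim\mathcal{N}(0,1)$ by symmetry and $R_j=R_i$. Yet $\rho_{ij}=1-2\,\mathbb{E}\bigl[T_i^2\mathbf{1}\{a\le|T_i|\le c_{\alpha/2}\}\bigr]\in(-1,1)$. Your bivariate-normal setup handles this implicitly, as does the section's asymptotic framing, but it belongs in the statement. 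Similarly, your positive-measure step uses $0<c_{\alpha/2}<\infty$, i.e. $\alpha\in(0,1)$.
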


Thus familywise error rate is optimized \textit{precisely} when the variance of the error distribution is highest. The following chart shows how familywise error rate and variance are negatively correlated for jointly normal tests $T_i \sim \mathcal{N}(0,1)$ with $\rho_{ij} \equiv \rho \geq 0.$

\begin{sidewaysfigure}[b]  
    \centering    \includegraphics[width=1.0\textwidth]{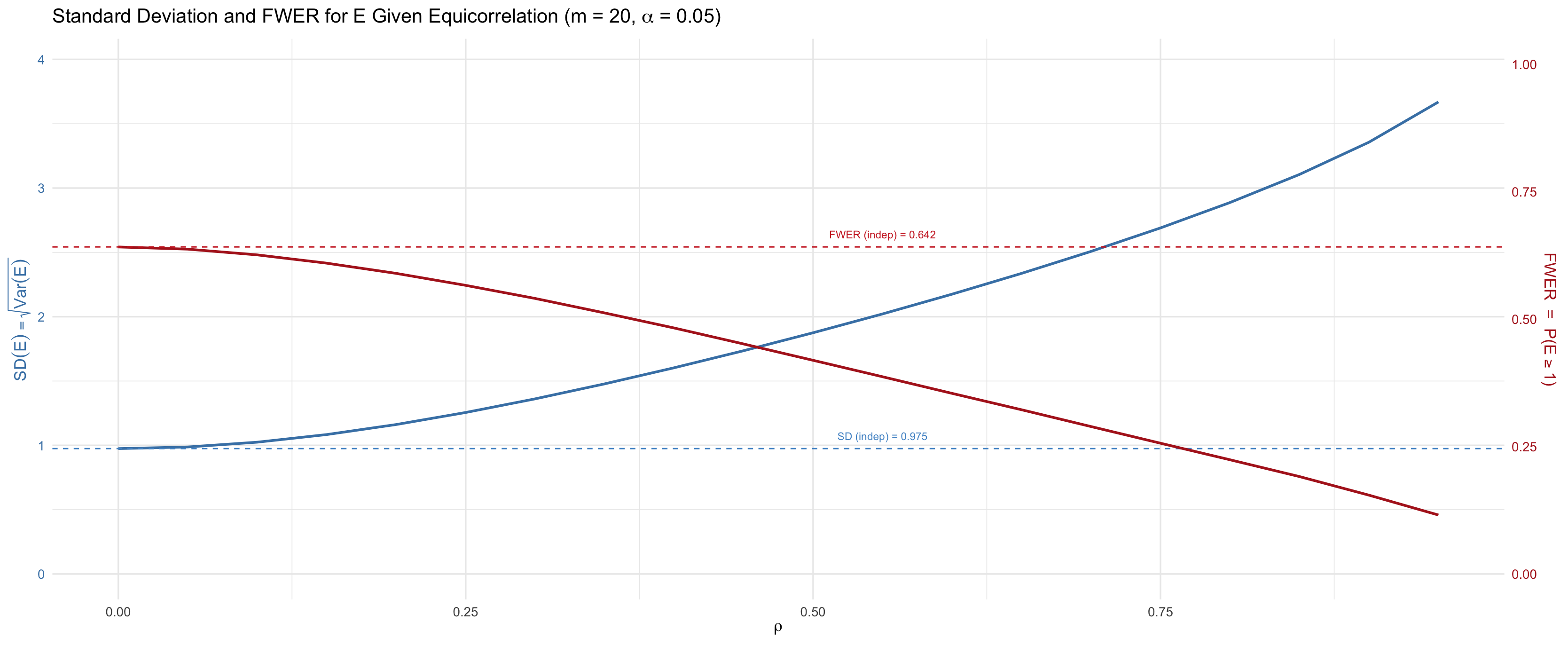}
    \caption{Standard Deviation vs. FWER for $E$ assuming $\Sigma_{ij} = \rho_{ij} = \rho$}
    \label{fig:fwer_vs_sd}
\end{sidewaysfigure}

The negative relationship observed above between FWER and variance of the Type I error distribution illustrated in Figure \ref{fig:fwer_vs_sd} is easy to understand: assume a fixed per-comparison error rate $\alpha$, the expected number of Type I errors is $C\alpha$ by linearity of expectation. However, a lower familywise error rate concentrates more probability mass on the event $E=0$, and therefore must spread out the remaining mass to larger values, increasing the variance.

\section{Subsampling and Decorrelation}\label{sec:subsampling}

In this section we analyze subsampling procedures that either exactly or asymptotically decorrelate a bounded (but possibly quite large) number $C$ of tests and consider the effects the procedures have on power. 

We stylize a bit by assuming that each test uses the \textit{same} inclusion and exclusion criteria, so that any observation $X_k$ can be an input of any test statistic $T_i$. This model works well for longitudinal cohort studies where the same patient population is followed and various outcomes are tracked over time, but does not describe the analytic situation of follow-on subgroup analyses (where only observations meeting more stringent inclusion criteria are accepted) or reused control groups (where novel therapies are compared only to a common control group). In those cases, the results of Theorem \ref{thm:asymptotic_linearity} still apply, and the analysis of this section applies \textit{mutatis mutandis} to those cases, as will be demonstrated in examples.

Corollary \ref{cor:pearson_cor} bounds the correlation between test statistics $T_i, T_j$ is bounded above by $\frac{\omega_{ij}}{\sqrt{r_i r_j}}$ where $\omega_{ij}$ is the size of overlap between $D_i$ and $D_j$ and $r_i = \frac{|D_i|}{|D|}$ is fixed. 

We consider two major classes of procedures: first, the method of \textit{data splitting} which guarantees correlation 0 across test statistics and second, \textit{independent uniform subsampling over $D$}. 

We will demonstrate that data splitting is in a formal sense asymptotically optimal as the size $N\to\infty$. However, there is a logistical catch: to implement data splitting requires a high degree of coordination across the investigators performing the $C$ studies to ensure that no data points overlap. Thus, data splitting is best applied in the context of a single investigator or a database with a centralized coordination process.

Next, we consider the procedures that assign to each test statistic $T_i$ a subsample that is drawn uniformly from $D$ of size $r(N)$. In these examples, the subsets may intersect and still have dependence among them; however, these procedures can be implemented by investigators independently of each other and are therefore amenable to deployment at the individual study level. We bound the variance contributed under such techniques in Corollary \ref{cor:variance_under_random_subsampling}.

\subsection{Data Splitting is Asymptotically Optimal, But Unlikely the Best Procedure in Practice}

In this section we discuss how the use of data splitting to partition a dataset $D$ into a fixed set $C$ of datasets of equal sizes is optimal by simultaneously decorrelating across test statistics while optimizing the minimum asymptotic relative efficiency across tests. 

We define the splitting procedure by:

\begin{definition}
   Let $N = |D|$ and assume $C \mid N$ and that $D = \{x_1,\dots, x_N\}$. The $C$-uniform data splitting procedure assigns to each test statistic $T_i$ the set 
   \begin{align}
   D_i = \{ x_j\in D|  j \equiv i \mod C\}.
   \end{align}
\end{definition}

First, ensuring disjoint data is necessary to ensure decorrelation across tests since common indices have a nontrivial expected contribution to the test $\propto\mathbb{E}[\psi_i(X_k) \psi_j(X_k)]$\footnote{That is, for a generic pair of asymptotically linear test statistics there is a distribution on $X$ for which $\mathbb{E}[\psi_i(X) \psi_j(X)] \neq 0$.} Thus, we can restrict to the family of subsampling procedures that produce a partition of $D$ into $C$ parts. Under mild regularity conditions, $C$-uniform splitting achieves the best possible asymptotic relative efficiency across tests.

\begin{proposition}\label{prop:asymptotic_optimality_of_splitting}
Let $S$ be a splitting procedure assigning $S(i) = D_i$. Let $T_i^s$ be the test statistic obtained by composing $T_i \circ S$, and let $T_i^f$ be the test statistic $T_i$ applied to $D$. 

The maximin asymptotic relative efficiency of a subsampling procedure partitioning $D$ into $C$ parts is 
\begin{align}
\max\min_{i} ARE(T^f_i,T_i^s) = \frac{1}{C}.
\end{align}

$C$-uniform data splitting obtains the maximum possible asymptotic relative efficiency.
\end{proposition}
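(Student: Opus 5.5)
We need the asymptotic relative efficiency of $T_i^s$ (computed on $D_i$, with $|D_i| = n_i = r_i N$) against $T_i^f$ (computed on all of $D$). We work with the asymptotically linear class of Definition~\ref{def:asymptotic-linearity} and take ARE as the ratio of asymptotic variances, or equivalently the ratio of the sample sizes needed for equal local power.

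\textbf{Step 1: the ARE is the sample fraction.} Both statistics share the same influence function $\psi_i$. Theorem~\ref{thm:asymptotic_linearity}, applied with $C=1$, gives
\[
\sqrt{n_i}\,(T_i^s-\theta_i)\to\mathcal N\bigl(0,\EE[\psi_i^2]\bigr), \qquad \sqrt{N}\,(T_i^f-\theta_i)\to\mathcal N\bigl(0,\EE[\psi_i^2]\bigr).
\]
So the asymptotic variances are $\EE[\psi_i^2]/n_i$ and $\EE[\psi_i^2]/N$. Their ratio is
\[
ARE(T_i^f,T_i^s)=\lim_{N\to\infty} \frac{n_i^{(N)}}{N}=\lim_{N\to\infty} r_i(N),
\]
or the corresponding $\liminf$ if the limit does not exist. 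Equivalently, under local alternatives $\theta_i=\theta_0+h/\sqrt N$, the noncentrality parameters are $h\sqrt{r_i}/\sigma_i$ and $h/\sigma_i$, so matching power requires a sample-size ratio of $r_i$. The mild regularity condition needed here is that $\psi_i$ is nondegenerate, $\EE[\psi_i^2]>0$, so the ratio is well defined.

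\textbf{Step 2: upper bound.} For any procedure partitioning $D$ into $C$ disjoint parts, $\sum_i n_i \le N$, hence $\sum_i r_i(N)\le 1$ for every $N$. By pigeonhole, $\min_i r_i(N)\le 1/C$ for every $N$. Taking $\liminf$ gives $\min_i ARE(T_i^f,T_i^s)\le 1/C$. The only care needed is that the minimizing index may change with $N$. Because $C$ is finite, we can pass to a subsequence along which one fixed index $i^*$ is always the minimizer, so $ARE_{i^*}\le 1/C$. This bound holds for every partition procedure.

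\textbf{Step 3: attainment.} Under $C$-uniform splitting, $n_i=N/C$ exactly, so $r_i\equiv 1/C$. The growth hypothesis $Nr_i\to\infty$ of Theorem~\ref{thm:asymptotic_linearity} therefore holds, and Step 1 gives $ARE(T_i^f,T_i^s)=1/C$ for every $i$. The bound of Step 2 is attained, which establishes both the maximin value $1/C$ and the optimality of $C$-uniform splitting.

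\textbf{Main obstacle.} The difficulty lies in Step 1, specifically in justifying that ARE reduces to $\lim r_i$. The definition must be fixed so that the ARE is well defined for all procedures. Sequences with $r_i\to 0$ give $ARE=0$ and do not affect the maximum. If $\lim r_i$ fails to exist, we define the ARE via $\liminf$ and the subsequence argument in Step 2 still goes through. Beyond this, the result is essentially the pigeonhole inequality $\min_i r_i \le 1/C$.
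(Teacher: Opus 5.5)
Your proposal is correct and follows the paper's proof essentially step for step. Both arguments reduce the ARE to the sample fraction $|D_i|/N$ (the paper cites Theorem 14.7 of van der Vaart, while you use the variance-ratio/noncentrality computation), apply pigeonhole to get $\min_i |D_i|/N \le 1/C$, and note that $C$-uniform splitting attains this bound. Your $\liminf$/subsequence treatment of non-convergent fractions and the nondegeneracy condition $\mathbb{E}[\psi_i^2]>0$ are refinements that the paper leaves implicit.
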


The proof of this proposition is in Appendix \ref{proof:proof_of_prop:asymptotic_optimality_of_splitting}

Thus, $C$-uniform data splitting is optimal with respect maximin asymptotic relative efficiency across the tests while guaranteeing their independence.

Yet, while $C$-uniform data splitting has an asymptotic guarantee of optimality, it has two major disadvantages which do not recommend it as a practice in most real-world settings: first, enforcing non-overlapping splits of the dataset requires some sort of registry of the splits, to ensure that no reuse is occurring. Second, data splitting limits the number of analyses which can be performed to a relatively restricted number. In the next subsections we examine sampling with replacement as an alternative procedure which can be more realistically be implemented. In Section \ref{capacity} we develop the notion of the \textit{capacity} of a procedure as a way to describe the number of statistical tests which can be run.

\subsection{Uniform Independence Subsampling Techniques are Suboptimal, But More Logistically Feasible}

Despite the theoretical optimality of data splitting, we are going to consider policies that \textit{simultaneously} use less data (and are therefore less powerful) and are not guaranteed to render the test statistics independent. This may seem peculiar, but the reason is that data-splitting is ill-suited for being implemented across tests conducted by independent research teams: verifying that no data overlap has occurred is a challenge for publicly available datasets. In this section we define families of independent uniform subsampling techniques, and we analyze the combinatorics of these techniques in bounding the pairwise overlap rates that occur in the covariance decomposition of Theorem \ref{thm:asymptotic_linearity}. In this way we aim to quantify the suboptimality of the comparatively easy-to-implement subsampling techniques against the asymptotically optimal splitting techniques.

Therefore, we consider a class of suboptimal policies that can be implemented by a federated group of investigators that still guarantee asymptotic decorrelation based on uniform, independent subsampling from $D$ with fractions of data use given by $r_i(N), r_j(N) \in (0,1)$ depending on the size $N = |D|$ of the dataset. Under such subsampling procedures, for each $N$ we have $\mathbb{E}[\omega_{ij}(N)] = r_i(N) r_j(N)$ so that 
\begin{align}
\mathbb{E}\left[\left|\frac{\omega_{ij}(N)}{\sqrt{r_i(N) r_j(N)}}\right|\right] =  \sqrt{r_i(N) r_j(N)}
\end{align}
is the expected correlation. 

Observe that for any fixed $C > 0$ and set of functions $r_i(N): \mathbb{N} \to [0,1]$ converging to $0$ such that $Nr_i(N) \to \infty$ will yield asymptotic pairwise decorrelation and asymptotic power $1$ in by Theorem \ref{thm:asymptotic_linearity}. 

We also analyze two other methods: fixed \textit{rate} methods such as $r_i(N) \equiv q_i \in (0,1)$ which does \textit{not} guarantee decorrelation (their correlation is asymptotically $\sqrt{q_iq_j} > 0$) and fixed \textit{sample} techniques which guarantee rapid decorrelation but do not gain power in $N$. In particular, this means that independent uniform subsampling procedures with $Nr_i \in O(N)\setminus o(N)$ will never guarantee the independence of test statistics for finite $N$.

\begin{definition}\label{def:subsampling_families}
We define a subsampling procedure $S$ sampling $C$ subsets of $D$ with $|D|=N$ to be egalitarian provided 
\begin{enumerate}[label=\roman*]
    \item $\frac{|S_i(D)|}{N} = r(N)$ independent of $i$ ($r(N)$ is called the \textit{fraction function} of $S$),
    \item Each $S_i$ is subsampled uniformly from $D$,
    \item The $S_i$ are independent.
\end{enumerate}
\end{definition}

We have shown that $C$-uniform data splitting guarantees the independence of testing procedures and is optimal among such procedures with respect to maximin asymptotic relative efficiency in Prop \ref{prop:asymptotic_optimality_of_splitting}. By contrast, egalitarian subsampling procedures are suboptimal:

\begin{proposition}\label{prop:subsampling_suboptimal}

Let $S$ be an egalitarian subsampling procedure with fraction function $r(N)$ with $r(N) >0$ for all $N > C$. Then 

\label{item:subsampling_are} \begin{align}
\max\min_{i} ARE(T^f_i,T_i^s) = \lim\limits_{N\to\infty} r(N).
\end{align}

 In particular, if $\lim\limits r(N) \to 0$ then 
\begin{align}
\max\min_{i} ARE(T^f_i,T_i^s) = 0.
\end{align}
\end{proposition}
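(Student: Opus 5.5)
The plan is to compute the asymptotic relative efficiency of each subsampled test directly from the asymptotic linearity of Definition~\ref{def:asymptotic-linearity}, using the same ARE notion as in Proposition~\ref{prop:asymptotic_optimality_of_splitting}. For asymptotically linear statistics that notion reduces to a ratio of sample sizes. If $T_i^f$ uses $N$ observations and $T_i^s$ uses $n_i^{(N)} = r(N)N$ observations from the same iid law, then both have the same influence function $\psi_i$. Their asymptotic variances are therefore $\mathbb{E}[\psi_i^2]/N$ and $\mathbb{E}[\psi_i^2]/(r(N)N)$. The ratio of efficiencies at sample size $N$ is exactly $r(N)$, up to $o(1)$ corrections coming from the $o_p(n^{-1/2})$ remainders. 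Taking $N\to\infty$ gives $ARE(T_i^f,T_i^s)=\lim_{N\to\infty} r(N)$, assuming this limit exists; otherwise I would use $\liminf$ and note this.

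The key steps, in order, are as follows.

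\begin{enumerate}
\item Condition on the egalitarian draw $S_i(D)$. Because the subsample is drawn uniformly and independently of the data values (condition (ii) and the iid assumption), the observations $(X_k)_{k\in S_i(D)}$ are again iid from the same law. Hence $T_i^s$ is asymptotically linear with the same influence function $\psi_i$, evaluated at sample size $r(N)N$. This requires $Nr(N)\to\infty$; if it fails, the subsampled test does not even reach asymptotic power, and I would treat that case as giving ARE $0$.
\item Apply the one-coordinate case of Theorem~\ref{thm:asymptotic_linearity}, with $C=1$ and $\varrho_{ii}=1$, to obtain $\sqrt{r(N)N}\,(T_i^s-\theta_i)\Rightarrow \mathcal N(0,\mathbb{E}[\psi_i^2])$.
\item Compare against $T_i^f$ under local (Pitman) alternatives. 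The ARE is the limit of the ratio of sample sizes needed to reach equal power, which for equal influence functions is the ratio $r(N)N/N$.
\item Observe that every test receives the same fraction $r(N)$ by condition (i). The minimum over $i$ is therefore attained by every test and equals $\lim r(N)$.
\item Note that, once the fraction function is fixed, the egalitarian procedure has no remaining design freedom. The maximum over such procedures is then trivial, and $\max\min_i ARE=\lim r(N)$. The case $r(N)\to 0$ gives $0$ immediately.
\end{enumerate}

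The main obstacle is making precise which class the maximum ranges over and what ARE means when $r(N)$ varies with $N$. Classical Pitman ARE compares sample sizes $n_1(\epsilon), n_2(\epsilon)$ along a sequence of alternatives, not a subsample that is a moving fraction of $N$. I would handle this by defining the efficiency at stage $N$ as the variance ratio $\mathbb{V}_\infty(T_i^f)/\mathbb{V}_\infty(T_i^s) = r(N)$, and then passing to the limit. I would also check that this definition is consistent with the one used in the proof of Proposition~\ref{prop:asymptotic_optimality_of_splitting}, where splitting gives $1/C$.

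A secondary point is that the randomness of $S_i$ does not affect the result. Since $|S_i(D)|$ is deterministic, the conditional limit law from step 2 is the same for every realization of the draw. The unconditional statement then follows by dominated convergence applied to the characteristic functions.
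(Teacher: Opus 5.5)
Your proposal is correct and follows essentially the paper's route. The paper proves this ``mutatis mutandis'' from the splitting argument: ARE is the squared ratio of asymptotic standard deviations, so $ARE(T_i^f,T_i^s)=|D_i|/N=r(N)$ for every $i$, and one then passes to the limit. Your extra care about conditioning on the random draw, the existence of $\lim r(N)$, and the triviality of the maximum once $r$ is fixed fills in details the paper leaves implicit.
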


The proof follows \textit{mutatis mutandis} from the proof of Proposition \ref{prop:asymptotic_optimality_of_splitting}.

Egalitarian subsampling procedures are therefore asymptotically suboptimal in a strong sense.

\subsection{Bounded Suboptimality of Uniform Independent Subsampling Procedures for Finite Samples}

We now probe the question of the conditions under which egalitarian subsampling procedures are \textit{performant} insofar as their expected variance is within some tolerance of the corresponding independent portfolio. We formalize this as follows:

\begin{definition}
Let $S$ be a subsampling procedure, $(T_i)_{i\leq C}$ a set of test statistics, and $E = \sum \mathbf{1}_{R_i}$ the count of errors. Then we define the expected variance ratio (EVR) of $S$ to be
\begin{align}
   EVR(S) =  \frac{\mathbb{E}_S[\mathbb{V}(E)]}{C\alpha(1-\alpha)}.
\end{align}
\end{definition}

Observe that perfectly correlated rejection regions and full overlap yield an EVR equal to $C$ while an independent portfolio has EVR equal to $1$.

In this section we analyze the EVR for egalitarian subsampling procedures. Egalitarian subsampling procedures have random intersection sizes, so that the \textit{realized} variance of the portfolio depends on the draw from $S$, so bounds on EVR require an extra term appearing in the quadratic $C(C-1)$. This term depends on the pairwise probability of large intersections between two test statistics. We assume for the remainder of this section that the tests $T_i$ are all distributed via $\mathcal{N}(0,1)$ (a reasonable asymptotic assumption by Theorem \ref{thm:asymptotic_linearity}).

\begin{proposition}\label{cor:variance_under_random_subsampling}
Let $E = \sum_{i=1}^C \mathbf{1}_{R_i}$ be the Type I error count 
under the global null and assume that each $T_i \sim \mathcal{N}(0,1)$ under the null. Suppose that under the sampling procedure $S$ that for all $i \neq j$,
\begin{align}
\mathbb{P}(|\rho_{ij}| \geq \rho_0) \leq w = w(\rho_0,N).
\end{align}
Then 
\begin{align}
\mathbb{E}_S[\VV(E)] & \leq C\underbrace{\alpha(1-\alpha)}_{\mathbb{V}(\mathbf{1}_{R_i})} + C(C-1)\left[\alpha(1-\alpha)w(\rho_0)+ R(\rho_0, c_{\alpha/2})\right] 
\end{align}
where $R(\rho_0, c_{\alpha/2}) = 
2[\Phi_{\rho_0}(c_{\alpha/2},c_{\alpha/2}) + 
\Phi_{-\rho_0}(c_{\alpha/2},c_{\alpha/2}) - 2\Phi(c_{\alpha/2})^2]$.

In particular, if $w = \frac{\gamma}{\binom{C}{2}}$ then 
\begin{align}
\mathbb{E}_S[\mathbb{V}(E)] \leq (C+2\gamma)\alpha(1-\alpha) + C(C-1)R(\rho_0,c_{\alpha/2})
\end{align}
\end{proposition}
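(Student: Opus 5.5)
We condition on the realized draw of the subsampling procedure $S$. For a fixed draw, the statistics are jointly normal with correlations $\rho_{ij}$, so the variance decomposition from the previous section applies:
\begin{align}
\VV(E \mid S) = C\alpha(1-\alpha) + 2\sum_{i<j}\Cov(\mathbf{1}_{R_i},\mathbf{1}_{R_j}\mid S).
\end{align}
Taking $\mathbb{E}_S$ gives $\mathbb{E}_S[\VV(E)] = C\alpha(1-\alpha) + 2\sum_{i<j}\mathbb{E}_S[\Cov(\mathbf{1}_{R_i},\mathbf{1}_{R_j}\mid S)]$. It therefore suffices to show, for each pair $i\neq j$,
\begin{align}
\mathbb{E}_S[\Cov(\mathbf{1}_{R_i},\mathbf{1}_{R_j}\mid S)] \leq \alpha(1-\alpha)\,w + R(\rho_0,c_{\alpha/2}).
\end{align}
Summing over the $\binom{C}{2}$ pairs and multiplying by $2$ then produces the factor $C(C-1)$.

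For a fixed pair, split the expectation on the event $A=\{|\rho_{ij}|\geq\rho_0\}$ and its complement.

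On $A$, use the trivial bound $\Cov(\mathbf{1}_{R_i},\mathbf{1}_{R_j}) \leq \sqrt{\VV(\mathbf{1}_{R_i})\VV(\mathbf{1}_{R_j})} = \alpha(1-\alpha)$, which follows from Cauchy--Schwarz. Since $\mathbb{P}(A)\leq w$, this event contributes at most $\alpha(1-\alpha)w$.

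On $A^c$, we have $|\rho_{ij}|<\rho_0$. Proposition~\ref{prop:correlated_rejection} gives $\Cov = R(\rho_{ij},c_{\alpha/2})$. Since $R(\rho,c)$ is even in $\rho$ (its formula is symmetric under $\rho\mapsto-\rho$), $R(\rho_{ij},c) = R(|\rho_{ij}|,c)$. By part (b), $R$ is increasing on $(0,1)$, so $R(|\rho_{ij}|,c) \leq R(\rho_0,c)$. Using $\mathbb{P}(A^c)\leq 1$ together with $R(\rho_0,c)\geq 0$ (at $\rho=0$ the value is $0$, and $R$ increases from there), this event contributes at most $R(\rho_0,c_{\alpha/2})$.

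The main point requiring care is the even-symmetry and monotonicity argument for $|\rho_{ij}|$. In particular, negative realized correlations must be handled through evenness rather than through part (c), and the bound $R(\rho_0,c)\geq 0$ must be justified. A second subtlety is that $\rho_{ij}$ is random through $S$, so the conditional-normality step must be stated for a fixed realization. As in the paper, this is justified asymptotically by Theorem~\ref{thm:asymptotic_linearity}.

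For the special case $w=\gamma/\binom{C}{2}$, substitute into the bound:
\begin{align}
C(C-1)\,\alpha(1-\alpha)\,\frac{\gamma}{\binom{C}{2}} = 2\gamma\,\alpha(1-\alpha),
\end{align}
since $C(C-1)/\binom{C}{2}=2$. Adding this to $C\alpha(1-\alpha)$ yields $(C+2\gamma)\alpha(1-\alpha) + C(C-1)R(\rho_0,c_{\alpha/2})$, as claimed.
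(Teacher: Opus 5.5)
Your proposal is correct and follows essentially the paper's argument: write the conditional variance as $C\alpha(1-\alpha)$ plus $2\sum_{i<j}R(\rho_{ij},c_{\alpha/2})$, split each pairwise expectation on $\{|\rho_{ij}|\geq\rho_0\}$, bound by $R(\rho_0,c_{\alpha/2})$ off that event and by $\alpha(1-\alpha)$ on it, and sum over the $\binom{C}{2}$ pairs. The only differences are small: you bound the large-correlation case by Cauchy--Schwarz instead of through $R(1,c_{\alpha/2})=\alpha(1-\alpha)$, and you state explicitly the evenness of $R$ and the nonnegativity $R(\rho_0,c_{\alpha/2})\geq 0$, which the paper's appeal to monotonicity leaves implicit.
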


The proof appears in Appendix \ref{proof:proof_of_cor_variance_under_random_subsampling} This result enables a grid-search approach to optimizing the variance with levels for $w(\rho_0,N)$ and $\rho_0$ as a function of $\rho_0$ for fixed $N$ and will be used in the worked example in Section \ref{sec:worked_example}.

This immediately yields an upper bound on the EVR for a subsampling procedure $S$

\begin{corollary}\label{cor:evr_bound}
Let $E = \sum_{i=1}^C \mathbf{1}_{R_i}$ be the Type I error count 
under the global null and assume that each $T_i \sim \mathcal{N}(0,1)$ under the null. Suppose that under the sampling procedure $S$ that for all $i \neq j$,
\begin{align}
\mathbb{P}(|\rho_{ij}| \geq \rho_0) \leq w = w(\rho_0,N).
\end{align}
Then 
\begin{align}
EVR(S) \leq 1+\frac{(C-1)}{\alpha(1-\alpha)}\left[\alpha(1-\alpha)w(\rho_0,N)+ R(\rho_0, c_{\alpha/2})\right]
\end{align}
\end{corollary}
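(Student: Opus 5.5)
This corollary follows by dividing the bound of Proposition~\ref{cor:variance_under_random_subsampling} by the independent baseline $C\alpha(1-\alpha)$. The hypotheses are identical: the same global null, standard normal $T_i$, and the same tail condition $\mathbb{P}(|\rho_{ij}|\geq\rho_0)\leq w(\rho_0,N)$ for all $i\neq j$. So I would invoke that proposition directly to obtain
\begin{align}
\mathbb{E}_S[\mathbb{V}(E)] \leq C\alpha(1-\alpha) + C(C-1)\left[\alpha(1-\alpha)w(\rho_0,N) + R(\rho_0,c_{\alpha/2})\right].
\end{align}

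By the definition of the expected variance ratio, $EVR(S) = \mathbb{E}_S[\mathbb{V}(E)]/(C\alpha(1-\alpha))$. Since $\alpha\in(0,1)$, this denominator is strictly positive, so the inequality is preserved under division. The first term becomes $1$. In the second term the factor $C$ cancels against $C(C-1)$, leaving
\[
\frac{C-1}{\alpha(1-\alpha)}\left[\alpha(1-\alpha)w(\rho_0,N)+R(\rho_0,c_{\alpha/2})\right],
\]
which is exactly the claimed bound. If desired, one could also record the simplified form $1 + (C-1)w + (C-1)R(\rho_0,c_{\alpha/2})/(\alpha(1-\alpha))$.

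All the substantive work, namely splitting each covariance term on the event $\{|\rho_{ij}|\geq\rho_0\}$ and its complement, is contained in the proposition. The only point requiring care is confirming that the two statements' hypotheses match, in particular that $w$ is the same uniform pairwise bound. There is no real obstacle here.
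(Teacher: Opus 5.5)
Your proposal is correct and follows the paper's route exactly: the paper obtains this corollary ``immediately'' by dividing the bound of Proposition~\ref{cor:variance_under_random_subsampling} by the positive baseline $C\alpha(1-\alpha)$, so the factor $C$ cancels against $C(C-1)$. Your check that the two statements have identical hypotheses, including the same uniform pairwise bound $w(\rho_0,N)$, is the only thing needed.
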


The remainder of this section will be spent investigating the contribution of the term $\mathbb{P}(|\rho_{ij}| \geq \rho_0)$.

\begin{definition}\label{def:bounded_suboptimality}
Let $C>0$ and let $(T_i) = T_1,\dots, T_C$ be a set of test statistics with fixed alternatives $(\theta_{1i}) \in \prod \Theta_i$. Let $D$ be a dataset of size $N$. A subsampling procedure $S$ is $(\rho_0,\beta_0,\gamma)$-performant for $T$ provided 

\begin{enumerate}[label=\roman*]
    \item\label{criterion:low_intersection} 
    (Low Probability of Large Pairwise Correlation) Under the subsampling procedure $S$, 
    \begin{align}
    \mathbb{P}(|\rho_{ij}| \geq \rho_0) \leq \frac{\gamma}{\binom{C}{2}}
    \end{align}
   
    \item\label{criterion:adequate_power} (Adequate Power Across Statistics) For every $T_i$, 
    \begin{align}
    \pi^S_i(\theta_i) \geq 1-\beta_0.
    \end{align}
    where $\pi^S_i(\theta_{1i})$ is the power of test $T_i$ when performing the test on $D_i = S_i(D)$.
\end{enumerate}

Define $M((T_i),(\theta_i),\beta_0)$ to be the maximum number of observations from $D$ required across $(T_i)$ under the fixed alternatives $(\theta_i)$ to be powered to $1-\beta_0$:
\begin{align}
M((T_i),(\theta_i),\beta_0) = \max\limits_{i\in C} \min\limits_{n \in \mathbb{N}} \{M \,|\, \pi_i(\theta_i,M) \geq 1- \beta_0  \}
\end{align}
\end{definition}

\begin{remark}
The relaxed criteria above reflect relevant considerations in applied analysis: analyses are conducted assuming power is sufficiently high \textit{and} we may tolerate some bounded level of correlation to avoid the logistical problems imposed by data splitting.

In particular, a $(\rho_0, \beta_0, \gamma)$-performant procedure incurs an increase in the expected number of Type II errors at most $C\beta_0$.
and ensures that the probability of \textit{at least one pair} of test statistics expected number of pairs with correlation exceeding $\rho_0$ is of order at most $\gamma$ by the union bound.
\end{remark}

\begin{remark}
In practice, when using a sufficiently large dataset a very low $\beta_0$ should be selected. If, for example, $\beta_0 = 0.2$, then $80\%$ power would be deemed performant. This incurs a substantial increase in expected Type II errors versus using the entire dataset, where the rate of Type II errors concentrates very close to $\beta \approx 0$. We suggest using maximally $\beta \leq 0.01$ since this incurs an expected cost of one Type II error per 100 tests conducted on the dataset versus the alternative of reusing all of $D$. In many cases, an even smaller $\beta$ will be appropriate.
\end{remark}




Immediate from the definition of $(\rho_0,\beta_0,\gamma)$-performant policies is that $C$-uniform data splitting is strongly performant:

\begin{proposition}\label{prop:data_splitting_is_performant}
$C$-uniform data splitting is $(0,\beta_0,0)$-performant provided 
\begin{align}
   |D| \geq CM((T_i),(\theta_i),\beta_0). 
\end{align}
\end{proposition}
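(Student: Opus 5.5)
The plan is to verify the two criteria of Definition~\ref{def:bounded_suboptimality} directly, with parameters $\rho_0 = 0$ and $\gamma = 0$. The argument has three steps: show the splits are disjoint, read off zero correlation from Theorem~\ref{thm:asymptotic_linearity}, and check power from the size condition.

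\textbf{Disjointness.} Under $C$-uniform splitting, the sets $D_i = \{x_j : j \equiv i \bmod C\}$ are the residue classes mod $C$. They are pairwise disjoint, so $\omega_{ij}(N) = |D_i \cap D_j|/N = 0$ for every $N$ and all $i \neq j$. Since $C \mid N$, each class has exactly $N/C$ elements, so $r_i(N) = 1/C$ for every $i$.

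\textbf{Criterion (i).} Since $\omega_{ij} = 0$, the overlap factor $\varrho_{ij}$ is $0$, and Theorem~\ref{thm:asymptotic_linearity} gives $\Sigma_{ij} = 0$ for $i\neq j$. Corollary~\ref{cor:pearson_cor} then bounds $|\rho_{ij}|$ by $\omega_{ij}/\sqrt{r_ir_j} = 0$.

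A stronger statement holds at finite $N$. The $X_k$ are iid and the blocks $D_i$ are disjoint, so $T_i$ and $T_j$ are functions of disjoint collections of independent variables and are therefore exactly independent. Their correlation is identically $0$.

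Two details need care here:
\begin{itemize}
\item The event $\{|\rho_{ij}| \geq \rho_0\}$ must be read with a convention at $\rho_0 = 0$. I will read criterion (i) as the requirement that $|\rho_{ij}|$ exceed $\rho_0$ with probability at most $\gamma/\binom{C}{2}$. Because $\rho_{ij} \equiv 0$ deterministically, the exceedance probability is $0$, so we may take $\gamma = 0$.
\item The splitting procedure is non-random, so the probability is over a degenerate distribution and no concentration argument is needed.
\end{itemize}

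\textbf{Criterion (ii).} Each $T_i$ is computed on $|D_i| = N/C$ observations. The hypothesis $N \geq C\,M((T_i),(\theta_i),\beta_0)$ gives $|D_i| \geq M$. By the definition of $M$ as a maximum over $i$ of the minimal sample size achieving power $1-\beta_0$, each test reaches that power at some size $M_i \leq M$. Hence $\pi_i^S(\theta_i) \geq 1-\beta_0$.

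\textbf{Main obstacle.} The one delicate point is that this step needs power to be nondecreasing in the sample size. Otherwise, power reaching $1-\beta_0$ at $M_i$ need not imply the same at $N/C \geq M_i$. I would add this monotonicity as a standing mild assumption, since it holds for consistent tests based on asymptotically linear statistics against fixed alternatives. Alternatively, I would read $M$ as the threshold beyond which power stays above $1-\beta_0$. With either fix the proposition follows immediately.
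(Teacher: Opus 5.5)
Your proposal is correct and follows the same route as the paper, which gives no separate proof and simply calls the result immediate from the definition of performance: disjoint residue classes give zero overlap and hence zero correlation, and $|D_i| = N/C \ge M$ gives the required power. Your two caveats are genuine points the paper leaves implicit. First, with $\rho_0 = 0$ the event $\{|\rho_{ij}| \ge 0\}$ has probability one, so criterion (i) has to be read with a strict inequality for $\gamma = 0$ to work. Second, the power step does need $\pi_i(\theta_i,n)$ to be nondecreasing in $n$, or $M$ to be read as a threshold beyond which power stays at least $1-\beta_0$.
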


The remainder of this section will be spent evaluating the conditions under which the subsampling techniques in Definition \ref{def:subsampling_families} are performant.

First, we relate the probability statement in Criterion \ref{criterion:low_intersection} to the size of the pairwise overlaps $|D_i\cap D_j|$

\begin{proposition}\label{prop:union_bound_performant}
Let $S$ be an egalitarian subsampling procedure on $D$ with $|D| = N$. Then

\begin{enumerate}
    \item $\mathbb{E}[|\rho_{{ij}}|] \leq  r(N).$
    \item The probability that $\rho_{ij}\geq \rho_0$ is bounded above by \begin{align} 
\mathbb{P}_S\left(|\rho_{ij}| \geq \rho_0\right) & \leq \mathbb{P}_S\left(\frac{\omega_{ij}}{\sqrt{r_ir_j}}\mathbb{E}[\psi_i\psi_j] \geq \rho_0\right) \\
& \leq \mathbb{P}_S\left(|D_i\cap D_j| \geq \frac{Nr(N)\rho_0}{\mathbb{E}[\psi_i\psi_j]}\right)
\end{align} 
\item $|D_i\cap D_j| \sim \text{Hypergeometric}(N,Nr(N), Nr(N)).$
\end{enumerate}


\end{proposition}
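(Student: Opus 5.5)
We prove the three items in the order (3), (1), (2), since the distributional statement about the overlap drives the other two. Throughout, we identify $\rho_{ij}$ with the (asymptotic) Pearson correlation of $T_i,T_j$ computed from the realized subsamples $D_i=S_i(D)$, $D_j=S_j(D)$. We normalize the influence functions so that $\mathbb{E}[\psi_i^2]=\mathbb{E}[\psi_j^2]=1$; this matches the standing assumption that the $T_i$ are $z$-statistics. With this normalization, Theorem~\ref{thm:asymptotic_linearity} and Corollary~\ref{cor:pearson_cor} give, conditional on the draw of $S$,
\begin{align}
|\rho_{ij}| \leq \frac{\omega_{ij}}{\sqrt{r_ir_j}}\,\bigl|\mathbb{E}[\psi_i\psi_j]\bigr| \leq \frac{\omega_{ij}}{\sqrt{r_ir_j}}.
\end{align}

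For item (3), condition on $D_i$, which is a uniformly random subset of $[N]$ of size $Nr(N)$. Since $S$ is egalitarian, $D_j$ is drawn independently of $D_i$ and uniformly among subsets of size $Nr(N)$. The overlap $|D_i\cap D_j|$ is then the number of ``marked'' elements (those in $D_i$) obtained in a draw of $Nr(N)$ items without replacement from a population of $N$ containing $Nr(N)$ marked items. This is exactly $\text{Hypergeometric}(N,Nr(N),Nr(N))$. The conditional law does not depend on $D_i$, so it is also the unconditional law.

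For item (1), take expectations of the displayed bound. Because $r_i=r_j=r(N)$, we have $\sqrt{r_ir_j}=r(N)$, and the hypergeometric mean gives $\mathbb{E}|D_i\cap D_j| = (Nr(N))^2/N = Nr(N)^2$, so $\mathbb{E}[\omega_{ij}]=r(N)^2$. Hence
\begin{align}
\mathbb{E}|\rho_{ij}| \leq \frac{\mathbb{E}[\omega_{ij}]}{r(N)} = r(N).
\end{align}
This agrees with the general computation $\sqrt{r_ir_j}$ stated before Definition~\ref{def:subsampling_families}.

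For item (2), argue by event inclusion. The correlation bound shows that $\{|\rho_{ij}|\geq\rho_0\}\subseteq\{\frac{\omega_{ij}}{\sqrt{r_ir_j}}\mathbb{E}[\psi_i\psi_j]\geq\rho_0\}$; here we either take $\mathbb{E}[\psi_i\psi_j]\geq 0$, as in the statement, or read it as $|\mathbb{E}[\psi_i\psi_j]|$. Substitute $\omega_{ij}=|D_i\cap D_j|/N$ and $\sqrt{r_ir_j}=r(N)$, and rearrange, dividing by $\mathbb{E}[\psi_i\psi_j]$ when it is positive. This gives the event $\{|D_i\cap D_j|\geq Nr(N)\rho_0/\mathbb{E}[\psi_i\psi_j]\}$, and monotonicity of probability yields both inequalities. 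If $\mathbb{E}[\psi_i\psi_j]=0$, the event is empty, the probability is $0$, and the bound holds trivially under the convention that the threshold is $+\infty$.

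The main obstacle is the finite-$N$ interpretation of $\rho_{ij}$. Corollary~\ref{cor:pearson_cor} is an asymptotic statement, while the proposition applies it to a realized finite draw. I would handle this as the paper does elsewhere: define $\rho_{ij}$ as the correlation of the limiting normal vector in Theorem~\ref{thm:asymptotic_linearity}, evaluated at the realized overlap ratio. Equivalently, one can note that for exactly linear statistics $\frac{1}{n}\sum\psi(X_k)$ the finite-sample covariance is exactly $\frac{|D_i\cap D_j|}{\sqrt{n_in_j}}\mathbb{E}[\psi_i\psi_j]$. Either route makes the first inequality exact rather than merely asymptotic.
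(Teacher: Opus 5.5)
Your proof is correct and follows essentially the same route as the paper: bound $|\rho_{ij}|$ by the overlap ratio $|D_i\cap D_j|/(Nr(N))$ via Corollary~\ref{cor:pearson_cor}, pass to probabilities by event inclusion, and identify the hypergeometric law from independent uniform draws of equal size. Several details you supply are left implicit in the paper's proof: the derivation of item (1) from the hypergeometric mean, the unit-variance normalization, the sign caveat on $\mathbb{E}[\psi_i\psi_j]$, and the finite-$N$ reading of $\rho_{ij}$ (the paper in fact only writes out the cruder threshold $Nr(N)\rho_0$, i.e.\ the case $|\mathbb{E}[\psi_i\psi_j]|\le 1$).
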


The proof is in Appendix \ref{proof:prop_union_bound_performant}.

It therefore remains to bound probabilities of the form $\mathbb{P}(|D_1\cap D_2| \geq Nr(N)\rho_0)$. We do so by exploiting tail inequalities of the Hypergeometric distribution whenever possible.

\begin{proposition}\label{prop:subsampling_tail_bound}
Let $S$ be an egalitarian subsampling procedure with associated subsample fraction $r(N) \in (0,1)$. 

Then 
\begin{enumerate}[label=\roman*]
    \item\label{item:expectation} $\mathbb{E}[|D_i\cap D_j|] = N r(N)^2$,
    \item\label{item:chernoff_prep}  (Preparation for Chernoff Bound) 
    \begin{align}
        \mathbb{P}\left(|D_1\cap D_2| \geq Nr(N)\rho_0\right) = \mathbb{P}\left(|D_1\cap D_2| \geq \frac{\rho_0}{r(N)} \mathbb{E}[|D_i\cap D_j|]\right).
    \end{align}

    \item\label{item:chernoff_bound} (Chernoff Bound). Let $\delta = \delta(\rho_0,r(N)) =  \frac{\rho_0}{r(N)}-1$. Then 
    \begin{align}
     \mathbb{P}\left(|D_1\cap D_2| \geq Nr(N)\rho_0\right) \leq \left(\frac{e^{\delta}}{(1+\delta)^{1+\delta}}\right)^{Nr(N)^2}
    \end{align}

    \item\label{item:hoeffding} (Hoeffding Bound) Let $\delta = \frac{\rho_0}{r(N)}-1$. Then 
    \begin{align}
     \mathbb{P}\left(|D_1\cap D_2| \geq Nr(N)\rho_0\right) \leq e^{-2\delta^2 Nr(N)^3}
    \end{align}

    \item\label{item:markov} (Markov Bound) 
    \begin{align}
     \mathbb{P}\left(|D_1\cap D_2| \geq Nr(N)\rho_0\right) \leq \frac{r(N)}{\rho_0}.
    \end{align}
\end{enumerate}
\end{proposition}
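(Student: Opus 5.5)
The plan is to work directly with the distribution of the overlap. By Proposition~\ref{prop:union_bound_performant}, $X := |D_1\cap D_2| \sim \text{Hypergeometric}(N, n, n)$ with $n = Nr(N)$. Concretely, condition on $D_1$, a fixed set of $n$ ``marked'' elements. Then $D_2$ is a uniform draw of $n$ elements without replacement, and $X$ counts the marked elements drawn.

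For item \ref{item:expectation}, write $X = \sum_{k\in D_2}\mathbf{1}\{k\in D_1\}$. Each indicator has mean $n/N = r(N)$, so $\mathbb{E}[X] = n\cdot r(N) = Nr(N)^2$. Item \ref{item:chernoff_prep} is then pure algebra: $Nr(N)\rho_0 = \frac{\rho_0}{r(N)}\cdot Nr(N)^2 = (1+\delta)\mathbb{E}[X]$. Item \ref{item:markov} follows from Markov's inequality: $\mathbb{P}(X\geq Nr\rho_0)\leq \frac{Nr^2}{Nr\rho_0} = r/\rho_0$.

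For the exponential bounds I would invoke Hoeffding's classical observation (1963, Theorem 4) that sampling without replacement is dominated in convex order by sampling with replacement. Precisely, $\mathbb{E}[f(X)]\leq \mathbb{E}[f(Y)]$ for every continuous convex $f$, where $Y\sim\text{Binomial}(n, r(N))$. Applying this with $f(x)=e^{tx}$, $t>0$, transfers every moment generating function bound from the binomial to $X$.

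For item \ref{item:chernoff_bound}, use $\mathbb{E}[e^{tY}] = (1+r(e^t-1))^n \leq \exp(nr(e^t-1)) = \exp(\mu(e^t-1))$, where $\mu = Nr^2$. Then apply $\mathbb{P}(X\geq(1+\delta)\mu)\leq e^{-t(1+\delta)\mu}\mathbb{E}[e^{tX}]$ and optimize at $t=\log(1+\delta)$. This gives $\left(e^\delta/(1+\delta)^{1+\delta}\right)^{\mu}$, which is exactly the stated bound with exponent $Nr(N)^2$.

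For item \ref{item:hoeffding}, apply Hoeffding's inequality for sums of $n$ variables in $[0,1]$, which also holds without replacement by the same convex-order argument. It gives $\mathbb{P}(X-\mu\geq s)\leq e^{-2s^2/n}$. Take $s = \delta\mu = \delta Nr^2$, so that $2s^2/n = 2\delta^2N^2r^4/(Nr) = 2\delta^2 Nr^3$, as claimed.

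The deviation bounds only make sense when $\delta>0$, i.e.\ $\rho_0>r(N)$. I would note that for $\delta\leq 0$ the Chernoff expression is at least $1$, so that bound is trivially true. The Hoeffding expression, however, is below $1$ even for negative $\delta$, so the statement must implicitly assume $\rho_0>r(N)$; I would state this explicitly. Likewise, if $Nr(N)\notin\mathbb{N}$, rounding is absorbed in the obvious way.

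The main obstacle is the justification of the without-replacement reduction, i.e.\ the convex domination of the hypergeometric by the binomial. I would cite Hoeffding's theorem rather than reprove it. If a self-contained argument is wanted, the alternative is to note that the hypergeometric indicators are negatively associated, so $\mathbb{E}[\prod e^{tI_k}]\leq\prod\mathbb{E}[e^{tI_k}]$, which suffices for both exponential bounds.
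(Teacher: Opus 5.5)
For $\rho_0>r(N)$ your argument is correct and follows the paper's route. The paper also gets item i by linearity of expectation, treats ii as algebra and v as Markov's inequality. It obtains iii and iv by citing a hypergeometric Chernoff bound (Mulzer) and Hoeffding's 1963 inequality. The without-replacement validity of the latter is exactly the convex-order comparison you invoke. Your exponent $2(\delta\mu)^2/n = 2\delta^2 N r(N)^3$ is the paper's computation.

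The one error is your side remark that for $\delta\le 0$ the Chernoff expression is at least $1$. The base is
\[
\frac{e^{\delta}}{(1+\delta)^{1+\delta}} = \exp\bigl(-[(1+\delta)\log(1+\delta)-\delta]\bigr).
\]
Write $g(\delta)=(1+\delta)\log(1+\delta)-\delta$. It is strictly convex on $(-1,\infty)$ with $g(0)=g'(0)=0$, so $g(\delta)>0$ for every $\delta\neq 0$. Hence the expression is strictly below $1$ for $\delta\in(-1,0)$. There it is the lower-tail Chernoff bound, and read as an upper-tail bound, item iii is false.

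A concrete counterexample: take $N=100$, $r(N)=1/2$, $\rho_0=1/4$. Then $\mu=25$, $\delta=-1/2$, and the threshold is $12.5$. The right-hand side is $(e^{-1/2}/\sqrt{1/2})^{25}\approx 0.02$. But $|D_1\cap D_2|\sim\text{Hypergeometric}(100,50,50)$ has mean $25$ and standard deviation about $2.5$, so the left-hand side is essentially $1$.

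So you are right that the statement silently needs an extra hypothesis, but it is needed for iii as well as iv. The correct hypothesis is $\rho_0\ge r(N)$; at $\rho_0=r(N)$ both right-hand sides equal $1$ and the bounds hold trivially. This is the condition the paper imposes when it actually uses these bounds, in Corollary~\ref{cor:bounding_variance} and Proposition~\ref{prop:egalitarian_evr_inversion}.
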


\begin{remark}
The Chernoff and Hoeffding bounds above are useful in different regimes. The Chernoff bound is most helpful when $r(N) \geq \frac{1}{\sqrt{N}}$ and $\rho_0 \geq r(N)$, in which case the expected pairwise intersection size is $Nr(N)^2 \geq 1$, making the exponent occurring in clause \ref{item:chernoff_bound} positive.

The bound obtained in clause $\label{item:hoeffding}$ is useful in the regime where $\delta^2Nr(N)^3 \geq 1$, which occurs when $\rho_0 \geq \frac{1}{\sqrt{N r(N)}}$. This yields nontrivial tail bounds even when $r(N) \leq \frac{1}{\sqrt{N}}$, but at the cost of a factor of $\frac{1}{\sqrt{r(N)}}$ in $\rho_0$.

In the case that $r(N) \leq \frac{1}{\sqrt{N}}$, one has a low probability of overlap but, when overlaps \textit{do} occur, they can have large relative sizes. In this case, the Markov bound outperforms either bound.\footnote{For a concrete simple example, if $N = 10^6$ ,and $Nr(N) = 5$, there is probability of order $10^{-5}$ that the two subsets intersect. However, conditional on intersecting nontrivially their relative intersection is of size at least $\frac{1}{5} = 0.2$.} 
\end{remark}

\begin{remark}
The above propositions are weaker than optimal if you know bounds on $|\mathbb{E}[\psi_i\psi_j]|$, in which case if $|\mathbb{E}[\psi_i\psi_j]| \leq \epsilon$ $\delta = \frac{\rho}{\epsilon r(N)}$. Our result is the degenerate case of $|\mathbb{E}[\psi_i\psi_j]| \leq 1$.
\end{remark}

The proof of this theorem appears in Appendix \ref{proof:proof_of_prop_subsampling_tail_bound}.

We let 
\begin{align} p_{Chernoff}(N,r(N),\rho_0) =  \left(\frac{e^{\delta(\rho_0,r(N))}}{(1+\delta(\rho_0,r(N)))^{1+\delta(\rho_0,r(N))}}\right)^{Nr(N)^2}
\end{align}

and 
\begin{align}
    p_{Hoeffding}(N,r(N),\rho_0) \leq e^{-2\delta(\rho_0,r(N))^2 Nr(N)^3}.
\end{align}

With 
\begin{align}
    p_{mixed}(N,r(N),\rho_0) = \min\{p_{Chernoff}(N,r(N),\rho_0), p_{Hoeffding}(N,r(N),\rho_0)\}
\end{align}
we have that 
\begin{align}
\mathbb{P}\left(|D_1\cap D_2| \geq Nr(N)\rho_0\right) \leq p_{mixed}(N,r(N),\rho_0).
\end{align}

These results allow us to bound the expected variance in proposition \ref{cor:variance_under_random_subsampling}  

\begin{corollary}\label{cor:bounding_variance}
Let $E = \sum_{i=1}^C \mathbf{1}_{R_i}$ be the Type I error count 
under the global null. Let $S$ be an egalitarian subsampling procedure with fraction function $r(N)$ and let $\rho_0 > r(N)$.
Then 
\begin{align}
\mathbb{E}_S[\VV(E)] & \leq  C\alpha(1-\alpha) + C(C-1)\left[\alpha(1-\alpha)p_{mixed}(N,r(N),\rho_0)+ R(\rho_0, c_{\alpha/2})\right]\\
\end{align}

and 
\begin{align}
EVR(S) & \leq 1+\frac{(C-1)}{\alpha(1-\alpha)}\left[\alpha(1-\alpha)p_{mixed}(N,r(N),\rho_0)+ R(\rho_0, c_{\alpha/2})\right].
\end{align}
\end{corollary}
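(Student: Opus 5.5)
The corollary follows by combining Proposition~\ref{cor:variance_under_random_subsampling} (equivalently Corollary~\ref{cor:evr_bound}) with the tail estimates of Propositions~\ref{prop:union_bound_performant} and~\ref{prop:subsampling_tail_bound}. Proposition~\ref{cor:variance_under_random_subsampling} only requires a uniform bound $w(\rho_0,N)$ on $\mathbb{P}_S(|\rho_{ij}|\geq\rho_0)$ for all pairs $i\neq j$. So the plan is to show that $w = p_{mixed}(N,r(N),\rho_0)$ is a valid choice and then substitute it.

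\textbf{Step 1: reduce the correlation tail to an overlap tail.} Fix $i\neq j$. By Proposition~\ref{prop:union_bound_performant}, together with the normalization $|\mathbb{E}[\psi_i\psi_j]|\leq 1$ for standardized influence functions (Corollary~\ref{cor:pearson_cor}), we have
\begin{align}
\mathbb{P}_S(|\rho_{ij}|\geq\rho_0)\leq \mathbb{P}_S\bigl(|D_i\cap D_j|\geq Nr(N)\rho_0\bigr).
\end{align}
Here the egalitarian property gives $r_i=r_j=r(N)$, so $\omega_{ij}/\sqrt{r_ir_j}=|D_i\cap D_j|/(Nr(N))$. The bound is the same for every pair, because under the egalitarian procedure $|D_i\cap D_j|\sim\mathrm{Hypergeometric}(N,Nr(N),Nr(N))$ for every pair.

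\textbf{Step 2: apply the tail bounds.} Since $\rho_0>r(N)$, we have $\delta=\rho_0/r(N)-1>0$. This is exactly the regime where the Chernoff and Hoeffding clauses of Proposition~\ref{prop:subsampling_tail_bound} are valid upper-tail bounds: the threshold exceeds the mean $Nr(N)^2$. Each of these bounds the same probability, so their minimum $p_{mixed}$ does too. Hence $w(\rho_0,N)=p_{mixed}(N,r(N),\rho_0)$ satisfies the hypothesis of Proposition~\ref{cor:variance_under_random_subsampling}.

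\textbf{Step 3: substitute.} Plugging this $w$ into Proposition~\ref{cor:variance_under_random_subsampling} gives the stated bound on $\mathbb{E}_S[\VV(E)]$. Dividing by $C\alpha(1-\alpha)$, as in Corollary~\ref{cor:evr_bound}, gives the EVR bound.

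The only step needing care is Step~2's validity condition. The Chernoff and Hoeffding bounds for hypergeometric variables apply to upper tails at thresholds above the mean; this is why the hypothesis $\rho_0>r(N)$ is imposed. Applying them also relies on Hoeffding's observation that sampling without replacement is dominated in convex order by sampling with replacement, so the binomial Chernoff bound transfers. A second subtlety is that the paper's bound tacitly treats the realized correlations as the asymptotic expressions $\rho_{ij}=\frac{\omega_{ij}}{\sqrt{r_ir_j}}\mathbb{E}[\psi_i\psi_j]$ from Theorem~\ref{thm:asymptotic_linearity}, with $T_i$ exactly normal. I would state this modeling assumption explicitly rather than try to control finite-sample deviations from it.
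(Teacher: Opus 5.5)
Your proposal is correct and matches the paper's argument. The paper gives no separate proof, only the remark that the tail estimates bound the expected variance in Proposition~\ref{cor:variance_under_random_subsampling}: it takes $w(\rho_0,N)=p_{mixed}(N,r(N),\rho_0)$ there and in Corollary~\ref{cor:evr_bound}, justified by Propositions~\ref{prop:union_bound_performant} and~\ref{prop:subsampling_tail_bound}, with $\rho_0>r(N)$ guaranteeing $\delta>0$ so the upper-tail Chernoff/Hoeffding bounds apply — exactly as you do.
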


Since $p_{mixed}(N,r(N),\rho_0)$ decays exponentially in $\frac{\rho_0}{r(N)}$ while $R(\rho_0,c_{\alpha/2})$ is approximately quadratic near $r(N)$, so heuristically

\begin{align}
EVR(S) \approx 1 +(C-1)R(r(N), c_{\alpha/2}).
\end{align} 

\subsection{Tolerating a Bounded Increase in Expected Variance Ratio Allows More Capacity to Perform Analyses} \label{capacity}

The results of the previous section established sufficient conditions for when egalitarian subsampling procedures are performant and therefore bound the $EVR$ of the subsampling procedure. In this section we examine how many studies $C$ a given subsampling procedure $S$ and dataset $D$ of size $N$ each requiring $\geq M$ observations can sustain while satisfying $EVR(S) \leq 1+{\delta}$. 

\begin{definition}\label{def:capacity}
Let $\delta >0$ and $S$ be a data allocation procedure defined for an arbitrary number of studies. We define the \textit{capacity} of $S$ together with dataset $D$ of size $N$ with per-study sample requirement $M$ to be

\begin{align}
C(S,N,M,\delta) = \max\{C \,\mid\, EVR(S,N) \leq 1+\delta \text{ and } |S_i(D)| \geq M.\}
\end{align}
\end{definition}

It is in this context where data splitting incurs a tradeoff: by enforcing \textit{exact} independence, data splitting allows only linear growth in the number of tests it can accommodate only  More precisely: $C$-uniform data splitting allows only $C \leq \lfloor \frac{N}{M} \rfloor$ by construction.\footnote{In analogy with egalitarian subsampling, we can think of data splitting as having a ``fraction function'' for the purposes of capacity: $r(N) = \frac{1}{M}$ where $M = M((T_i),(\theta_i),\beta_0)$ is the largest sample size required over a very large number of test statistics.}  
Allocating data linearly in $N$ results in a relatively few number of studies that are admissible. 

Egalitarian subsampling techniques are able to admit a far larger number of pairwise boundedly correlation test statistics due to the strong concentration bounds established in \ref{prop:subsampling_tail_bound}. 

\begin{proposition}\label{prop:egalitarian_evr_inversion}
Let $\delta \geq 0$, $S$ be an egalitarian subsampling procedure with fraction function $r(N) = \frac{b}{\sqrt{N}}$ and $N > \frac{M}{r(N)}$. Let $\rho_0 \geq r(N)$. Then for any number $C = C(S,N)$ of studies such that
\begin{align}
C(S,N) & \leq \frac{(\delta-1)\alpha(1-\alpha)}{\alpha(1-\alpha)p_{mixed}(N,r(N),\rho_0) + R(\rho_0,c_{\alpha/2})} \\
& \leq \frac{\delta-1}{\rho_0^2 + O(e^{-\Theta(\sqrt{N})})} \\
&\approx \frac{\delta-1}{(r(N)+\epsilon)^2} \\
& \approx \frac{(\delta-1)N}{b^2}
\end{align}
we have 
$EVR(S) \leq 1+\delta$.
\end{proposition}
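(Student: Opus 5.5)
The plan is to read the statement as an inversion of Corollary~\ref{cor:bounding_variance}. That corollary gives, for any egalitarian procedure with $\rho_0 > r(N)$,
\begin{align}
EVR(S) \leq 1+\frac{(C-1)}{\alpha(1-\alpha)}\left[\alpha(1-\alpha)p_{mixed}(N,r(N),\rho_0)+ R(\rho_0, c_{\alpha/2})\right].
\end{align}
Set $K = \alpha(1-\alpha)p_{mixed} + R(\rho_0,c_{\alpha/2})$. It then suffices to have $(C-1)K \leq \delta\,\alpha(1-\alpha)$, which holds whenever $C \leq \delta\alpha(1-\alpha)/K$, and in particular when $C \leq (\delta-1)\alpha(1-\alpha)/K$. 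The displayed constant $\delta-1$ is therefore stronger than necessary. I would keep the conservative form stated, noting that it follows a fortiori, although $C-1$ versus $C$ only matters by one unit.

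Before inverting, I would check the side conditions.
\begin{itemize}
\item The hypothesis $N > M/r(N)$ gives $Nr(N) = b\sqrt{N} \geq M$, so each study meets the power requirement. This is the $|S_i(D)|\geq M$ clause in Definition~\ref{def:capacity}.
\item We need $\rho_0 > r(N)$, taking strict inequality or a limit for the boundary case, so that Corollary~\ref{cor:bounding_variance} applies.
\end{itemize}

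The second line of the chain divides numerator and denominator by $\alpha(1-\alpha)$. It uses the bound $R(\rho_0,c_{\alpha/2}) \leq \alpha(1-\alpha)\rho_0^2$ stated after Corollary~\ref{cor:bounded_correlation_variance}, which gives $R/(\alpha(1-\alpha)) \leq \rho_0^2$. It also needs $p_{mixed} = O(e^{-\Theta(\sqrt N)})$. For the latter I would use Proposition~\ref{prop:subsampling_tail_bound} with $r(N) = b/\sqrt N$. Here $Nr(N)^2 = b^2$ is constant, so the Chernoff bound alone gives only a constant. Instead I would take $\rho_0 = r(N)+\epsilon$ with $\epsilon>0$ fixed, so that $\delta(\rho_0,r(N)) = \epsilon\sqrt N/b$. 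Then the Chernoff exponent is $b^2[\delta - (1+\delta)\log(1+\delta)]$, which is of order $-\sqrt N\log N$ and hence decays faster than $e^{-c\sqrt N}$. The Hoeffding exponent $2\delta^2Nr^3 = 2\epsilon^2 b\sqrt N$ gives $e^{-\Theta(\sqrt N)}$ directly. Because the denominator only grows when these terms are replaced by upper bounds, the inequality direction in the chain is correct.

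The last two approximate lines substitute $\rho_0 = r(N)+\epsilon$ and drop the exponentially small term, giving $(\delta-1)/(r(N)+\epsilon)^2$. Letting $\epsilon \to 0$ slowly, for example $\epsilon = o(1/\sqrt N)$ while $\delta\sqrt N\to\infty$, then gives $(\delta-1)N/b^2$.

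The main obstacle is exactly this choice of $\epsilon$. We need $\epsilon\sqrt N \to \infty$ so that the tail term vanishes, but $\epsilon = o(r(N)) = o(1/\sqrt N)$ so that $(r+\epsilon)^2 \sim r^2$. These two requirements are incompatible. So I would state the $\approx$ steps honestly as an order statement: with $\epsilon = \kappa b/\sqrt N$ for a large constant $\kappa$, the tail is $e^{-\Theta(\kappa^2 b^2)}$, which is negligible for large $b\kappa$. The capacity is then of order $(\delta-1)N/(b^2(1+\kappa)^2)$, which is $\Theta(N/b^2) = \Theta(1/r(N)^2)$. That rate is what the comparison with data splitting requires.
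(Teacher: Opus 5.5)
Your inversion of Corollary~\ref{cor:bounding_variance} is exactly the paper's proof, and it correctly shows that $C\le(\delta-1)\alpha(1-\alpha)/K$ forces $EVR(S)\le 1+\delta$. Two of your side remarks are off, though.

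\emph{The $\delta-1$ slack.} It is not a matter of one unit. The exact inversion is $C\le 1+\delta\alpha(1-\alpha)/K$. Writing $\delta-1$ therefore discards about $\alpha(1-\alpha)/K\approx\rho_0^{-2}$ studies, which is the whole capacity scale. It also makes the hypothesis empty for $\delta<1$, including the $\delta=0.1$ used in Section~\ref{sec:worked_example}.

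\emph{The direction of the chain.} Replacing $p_{mixed}$ and $R/\alpha(1-\alpha)$ by upper bounds enlarges the denominator. So the second line is \emph{smaller} than the first, and the displayed ``$\le$'' is reversed. What is true, and what you need, is that $C$ below the second line implies $C$ below the first. You should say that, rather than calling the chain's direction correct.

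The genuine gap is in your repair of the $\approx$ steps. You are right that the paper's own justification is inconsistent: it asserts $p_{mixed}=O(e^{-\Theta(\sqrt N)})$ while taking $\rho_0\approx r(N)$, yet the mean overlap $Nr(N)^2=b^2$ is fixed. But your choice $\rho_0=(1+\kappa)r(N)$ with $\kappa$ fixed has the same defect.

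With this choice, the Chernoff bound is $\bigl(e^{\kappa}/(1+\kappa)^{1+\kappa}\bigr)^{b^2}=e^{-b^2h(\kappa)}$, where $h(\kappa)=(1+\kappa)\log(1+\kappa)-\kappa$. This does not depend on $N$. Note also that $h$ is $\Theta(\kappa\log\kappa)$ for large $\kappa$, not $\Theta(\kappa^2)$. Meanwhile the Hoeffding exponent $2\kappa^2b^3/\sqrt N$ tends to $0$.

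So as $N\to\infty$, $R(\rho_0,c_{\alpha/2})\le\alpha(1-\alpha)(1+\kappa)^2b^2/N\to 0$, and the denominator tends to the positive constant $\alpha(1-\alpha)e^{-b^2h(\kappa)}$. Your bound on $C$ then saturates near $(\delta-1)e^{b^2h(\kappa)}$ instead of growing like $N/b^2$. ``Negligible'' would have to mean negligible against $b^2/N$, which requires $b^2h(\kappa)\gtrsim\log N$. Letting $\kappa(N)\to\infty$ at that rate gives only $N/(b^2\kappa(N)^2)$, a polylogarithmic loss. Any single-threshold split that charges the tail the worst case $\alpha(1-\alpha)$ hits this, because the overlap count has a fixed mean and hence an $N$-independent tail.

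The fix is to drop the split. Use the bound $R(\rho,c_{\alpha/2})\le\alpha(1-\alpha)\rho^2$, which you already invoke, on all of $[-1,1]$. Let $H=|D_i\cap D_j|$, which is hypergeometric with $\mathbb{E}H=b^2$ and $\mathbb{V}(H)\le b^2$. By Proposition~\ref{prop:union_bound_performant}, $|\rho_{ij}|\le H/(Nr(N))$. Then
\begin{align*}
\mathbb{E}_S[R(\rho_{ij},c_{\alpha/2})]\le\alpha(1-\alpha)\frac{\mathbb{E}[H^2]}{N^2r(N)^2}\le\alpha(1-\alpha)\frac{b^2+1}{N}.
\end{align*}
Hence $EVR(S)\le 1+(C-1)(b^2+1)/N$, and every $C\le 1+\delta N/(b^2+1)$ is admissible, for any $\delta>0$. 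This gives the $\Theta(N/b^2)=\Theta(1/r(N)^2)$ rate rigorously, which neither the paper's heuristic nor your fixed-$\kappa$ version does.
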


The proof of this proposition appears in Appendix \ref{proof:proof_of_prop_egalitarian_evr_inversion}.

This provides a simple way to measure the capacity of the dataset dependent on (a) achievable $\rho_0$ (e.g. $\rho_0 = r(N)$) and (b) the tolerance $\delta$ for increased $EVR(S)$ by way of numerical optimizaton to find the optimal balance of $R(\rho_0,c_{\alpha/2})$ and $p_{mixed}(N,r(N),\rho_0)$.

\section{Worked Examples}\label{sec:worked_example}

\subsection{Revisiting the Common Control Group Problem}\label{subsec:reused_control}

In this section, we revisit the common control group problem to illustrate the tradeoffs and considerations in applying egalitarian subsampling techniques. We adopt the balanced design to compare the mean in treatment group $t_i$ to control $c$ via
\begin{align}
T_i = \frac{1}{\sqrt{n}}\sum X_{t,i} - \frac{1}{\sqrt{n}}\sum X_{c,i} 
\end{align}
where the $X_{t,i}$ are treatment units and $X_{c,i}$ are control units. We assume that there are $10$ treatment groups, so we are evaluating $C=10$ contrasts using this dataset.

We evaluate this hypothesis under the global null that each observation is iid drawn from $\mathcal{N}(0,1)$ irrespective of exposure. To achieve pairwise power of $1-\beta_0 = 99\%$ at level $\alpha = 0.05$ for an effect size of Cohen's $d = 0.2 = \frac{\mu_1-\mu_2}{\sigma}$ requires at least $n_{\text{ctrl}} = n_{\text{treat(i)}} = 920$ observations.


Suppose that we have $N = 10000$ observations for the control and each treatment arm. We analyze the variance in the distribution of errors under the global null under different subsampling procedures:

\begin{enumerate}
    \item Data Gluttony: Each $T_i$ is evaluated on all $10000$ observations in $t_i$ and all $10000$ in $c$
    \item Data Splitting: Each $T_i$ is evaluated on all $10000$ observations in $t_i$ and observations in sequence $[1000i ,1000i+999]$ in $c$.
    \item Egalitarian Subsampling: Each $T_i$ is evaluated on $b\sqrt{N}$ observations in $t_i$ with $b \in \{10,15,20\}$. 
\end{enumerate}

Under these conditions, each test is adequately powered to at least $1-\beta_0 = 99\%$. By the results in \cite{dunnett1955multiple}, in the data gluttony case the pairwise correlation between test statistics is $\rho_{ij} = \frac{1}{2}$ when $i\neq j$.

For data splitting, $\rho_{ij} = 0$ when $i\neq j$.

By Corollary \ref{cor:pearson_cor}, in the egalitarian subsampling case we have 
\begin{align}
\mathbb{E}[|\rho_{ij}| ] & \leq \frac{r(N)}{2}\mathbb{E}[T_iT_j] \\
& =\frac{b}{2\sqrt{N}} \\
& = \frac{b}{200}
\end{align}

with the factor of $\frac{1}{2}$ appearing because each study evaluates disjoint treatment groups.

Under data gluttony, the expected variance is equal to 
\begin{align}
\mathbb{V}_{gluttony}(E) & = 10(10-1)R(\rho = 0.5,c = c_{1-\alpha/2}) + 10\alpha(1-\alpha) \\
& \approx 1.083
\end{align}

Under data splitting, the variance is equal to the minimum attainable
\begin{align}
\mathbb{V}_{splitting}(E) & =  10\alpha(1-\alpha) \\
& = 0.475.
\end{align}

For the egalitarian subsampling procedures with $b\in \{10,15,20\}$ we use grid search to identify the $\rho_0(b)$ optimizing the term 
\begin{align}
\epsilon(b) := \alpha(1-\alpha)p_{mixed}(N,r(N),\rho_0)+ R(\rho_0, c_{\alpha/2}).
\end{align}
Applying the inequality in Corollary \ref{cor:bounding_variance}, we obtain the upper bounds for expected variance tabulated in Table \ref{tab:egalitarian_variance_control_reused_control}. We observe that in each case, the upper bound on the expected variance $EVR(S_b) \leq 1.1$

\begin{sidewaystable}[b]
\centering
\begin{tabular}{lccccccc}
\hline
Design & $n_{\text{treat}}$ & $n_{\text{ctrl}}$ & Power & $\mathbb{E}[\rho]$ & $\mathbb{E}_S[\mathbb{V}(E)]$ upper bound & optimal $\rho_0$ & Max $C$ ($\delta = 0.1$) \\
\hline
Data Gluttony & 10000 & 10000 & $\approx 1$ & 0.50 & 1.083 & -- & 1 \\
Data Splitting & 10000 & 1000 & $> 0.999$ & 0 & 0.475000 & -- & 10 \\
Egalitarian ($b=10$) & 1000 & 1000 & 0.994 & 0.05 & 0.487999 & 0.0729 & 33 \\
Egalitarian ($b=15$) & 1500 & 1500 & $> 0.999$ & 0.075 & 0.497848 & 0.0971 & 19 \\
Egalitarian ($b=20$) & 2000 & 2000 & $> 0.999$ & 0.10 & 0.510651 & 0.1215 & 12 \\
\hline
\end{tabular}
\caption{Egalitarian Designs with Variance Bounds and Capacity. Power is computed per study for Cohen's $d = 0.2$ at level $\alpha = 0.05$ (two-sided).}
\label{tab:egalitarian_variance_control_reused_control}
\end{sidewaystable}

From the perspective of capacity, suppose we adopt a tolerance for $EVR(S) = 1+\delta$ with $\delta = 0.1$ and maintaining power $1-\beta_0$ for each test. Then data gluttony is unfeasible since 
\begin{align}
\frac{\mathbb{V}_{gluttony}(E)}{\mathbb{V}_{splitting}(E)} & \approx \frac{1.083}{0.475} \\
& \approx 2.28 \\
& > 1+\delta.
\end{align}

For data splitting, $C=10$ is feasible because $\frac{N}{10} = 1000 > 920$ exceeds the minimum needed to achieve power in an equal-arms design. However, $\frac{N}{11} = 909 < 920$, so if future treatment groups had sample size $<920$ the resulting study would fail to achieve required power.

For the egalitarian procedures we apply  Corollary \ref{prop:egalitarian_evr_inversion} using the optimal $\rho_0(b)$ values obtained above to reach the values in Table \ref{tab:egalitarian_variance_control_reused_control}. Observe that the subsampling procedure $S_{10}$ can sustain over 30 pairwise contrasts while achieving EVR bounded above by $1.1$ with the same sized control group, increasing by a factor of $>3$.

\subsection{Families of Univariate Regressions} \label{subsec:sur}

In this example, we consider the evaluation of $C=10$ univariate linear regressions given by $X_i = \beta_i Y_i +\beta_{0i}$ evaluated by a standard two-sided $t$-test at level $\alpha = 0.05$. We assume that $X_1,\dots, X_C$ and $Y_1,\dots,Y_C$ are all distinct and moreover that
\begin{enumerate}
    \item (Standard Gaussian Marginals) All $X_i, Y_i \sim \mathcal{N}(0,1)$,
    \item (Global Null) $X_1,\dots, X_n \perp Y_1,\dots, Y_n$,
    \item (Within-Block Dependence) $(X_1,\dots,X_C) \sim \mathcal{N}(0,\Sigma_X)$ and $(Y_1,\dots,Y_C) \sim \mathcal{N}(0,\Sigma_Y)$.
\end{enumerate}

We investigate how the expected variance of the error distribution of Type I errors depends on the structure of $\Sigma_X,\Sigma_Y$. To do so, we used the \texttt{gencor} \texttt{R} package to uniformly sample from the space of correlation matrices satisfying bounds on the magnitudes of the pairwise correlations.
\begin{enumerate}
    \item (Random Correlational Structure) $I =[-1,1]$: random correlation structure
    \item (Moderate to Highly Correlated) $I = \left[1,-\frac{1}{3}\right] \cup \left[\frac{1}{3},1\right]$
    \item (Highly Correlated) $I = \left[1,-\frac{2}{3}\right] \cup \left[\frac{2}{3},1\right]$
    \item (Moderately Correlated) $I = \left[-\frac{2}{3},-\frac{1}{3}\right] \cup \left[\frac{1}{3},\frac{2}{3}\right]$
    \item (Moderately Positively Correlated) $I = \left[\frac{1}{3},\frac{2}{3}\right]$
    \item (Highly Positively Correlated) $I = \left[\frac{2}{3},1\right]$
\end{enumerate}

Under each $I$, we sampled $\Sigma_X$ and $\Sigma_Y$ once from the uniform distribution on correlation matrices subject to the constraint that for $i\neq j$ $(\Sigma_X)_{ij}, (\Sigma_Y)_{ij} \in I$.  Then we performed a simulation of $B=5000$ draws of $N=10000$ observations from each $X \sim \mathcal{N}(0,\Sigma_X)$ and $Y \sim \mathcal{N}(0,\Sigma_X)$. In each simulation we computed the $t$-statistics estimating $X_i = \beta_{i}Y_{i} + \beta_{0i}$ and recorded the count of Type I errors under Data Gluttony (use of all $N = 10000$ observations, Data Splitting (allocating $N = 1000$ per estimate), and Egalitarian subsampling with $r(N) = \frac{b}{\sqrt{N}}$ with $b\in \{10,15,20\}$.

The sampled $\Sigma_X$ and $\Sigma_Y$ under each $I$ are reported in Appendix \ref{sec:simulated_correlation_matrices}. The plug-in variance of the error distribution under each procedure and with respect to each correlation matrix are recorded in Table \ref{tab:sim_variance_table_sur}. Observe that the variance of Data Gluttony is highly contingent on the values of $I$. By contrast, data splitting and egalitarian subsampling procedures maintain good control of variance even in the case of extremely positively associated variables.

\begin{sidewaystable}[b]
\centering
\small
\begin{tabular}{lcccccc}
\hline
 & \multicolumn{4}{c}{Mixed-Sign $I$} & \multicolumn{2}{c}{Positive $I$} \\
Design & $[-1,\, 1]$ & $\pm[.333,\, 1]$ & $\pm[.666,\, 1]$ & $\pm[.333,\, .666]$ & $[.333,\, .666]$ & $[.666,\, 1]$ \\
\hline
Data Gluttony             & 0.482 & 0.786 & 1.791 & 0.553 & 0.575 & 1.672 \\
Data Splitting            & 0.481 & 0.478 & 0.466 & 0.470 & 0.482 & 0.481 \\
Egalitarian ($b=10$) & 0.477 & 0.470 & 0.472 & 0.460 & 0.463 & 0.484 \\
Egalitarian ($b=15$) & 0.459 & 0.470 & 0.501 & 0.464 & 0.464 & 0.487 \\
Egalitarian ($b=20$) & 0.474 & 0.494 & 0.532 & 0.488 & 0.466 & 0.542 \\
\hline
\end{tabular}
\caption{Simulated $\mathbb{E}_S[\mathbb{V}(E)]$ under the global null for families of univariate regressions ($C=10$, $N=10000$, $\alpha = 0.05$). the notation $\pm[a,b]$ denotes $[-b,-a] \cup [a,b]$. The independence baseline is $C\alpha(1-\alpha) = 0.475$.}
\label{tab:sim_variance_table_sur}
\end{sidewaystable}

\section{Discussion}

The results of this manuscript demonstrate that subsampling is a technique that can simultaneously mitigate the risks of dependent Type I error while maintaining satisfactory power. Moreover, in principle subsampling can be performed by an \textit{individual} researcher without a high degree of coordination. The main implementation risk with subsampling is the malicious use of iterating over draws of the subsample to $p$-hack an investigator's results. This can be mitigated through requiring a publicly verifiable, timestamped, randomly drawn \textit{seed} generating the subsample that an investigator is required to report.

The regime in which data splitting is effective is for large $N$ beginning with $N = 10^4$ as our examples show. This is bolstered by the fact that to achieve power $\geq99\%$ to detect even small effects by Cohen's $d$ (and other related measures such as $h$) is typically on the order of $10^3$ \cite{cohen2013statistical}. For substantially larger datasets, the capacity only increases as the per-study sample size required remains fixed to detect meaningful effect sizes.

Subsampling can also play a role in improving the rigor of observational studies by enabling out of sample verification. This has been suggested in the case of splitting to be used in the context of observational studies to mitigate the effects of researcher degrees of freedom by Dahl et al. \cite{dahl_data_2008} and can be adapted to the subsampling case.

Subsampling may not be appropriate in all cases. For cases of rare diseases or uncommon exposures, there may simply not be enough data in the whole world to enable a large number of uncorrelated studies. In such cases, we may \textit{tolerate} the sequential reuse of data but must acknowledge that the variance in error rate is increased. In such cases, techniques to ensure hypotheses are as uncorrelated as possible such as Rosenbaum's approach via evidence factors \cite{rosenbaum2010evidence} or Walker's approach via theory-driven orthogonal predictions \cite{walker_orthogonal_2010} are more appropriate for managing dependence across evaluated hypotheses.

This work can be extended in many natural ways. Future work will adapt these results to controlling the variance of the distribution of false discoveries under various subsampling protocols. Throughout this manuscript, we limited our focus to the case of the global null to bound the Type I error risk as it is the appropriate setting for minimizing the worst case scenario for false discoveries. It allows for the analysis of prospective design techniques that provably bound this error regardless of the underlying mixture of true or false hypotheses. We account for Type II errors as a constraint (e.g. in the definition of performant subsampling techniques). To wit, observe that Theorem \ref{thm:asymptotic_linearity} holds locally around the null assuming sufficient regularity and the contribution of the overlap term $\varrho_{ij}$ is common to both. What \textit{can} change is the contribution of the dependence term $\mathbb{E}[\psi_i(X)\psi_j(X)]$ under various alternatives, but the weight of this change is mitigated by reducing the term $\varrho_{ij}$ to near $0$ through subsampling.\footnote{Heuristically, the variance of the count of false discoveries under a mixture of true and false nulls is often substantially lower than that of the corresponding count of Type I errors under the global null. Under a fixed probability $\varpi_0$ of true nulls, the variance of the false discoveries has $\binom{\varpi_0 C}{2} = \frac{\varpi_0^2C^2 - \varpi_0 C}{2} \approx \varpi_0^2\binom{C}{2}$ many pairwise covariance terms appearing, so assuming that $\mathbb{E}[\psi_i(X)\psi_j(X)]$ is nearly constant across alternatives then the variance of the error distribution under the mixture can be far lower than that of the global null. The variance \textit{inflation} factor for the distribution of false discoveries under this mixture can then be analyzed similarly to our existing approach.}

We approached the problem from mean-variance theory, but stop-loss theory provides a more robust utility-theoretic account of rational preference relations between portfolios \cite{kaas_modern_2008}. Our focus in this paper has been on the \textit{sufficiency} of subsampling techniques, and future work can surely find improvements in the \textit{efficiency} of these procedures in less restrictive contexts.

\section{Acknowledgements}

All simulations and analyses were conducted in \texttt{R} version 4.3.1 \cite{Rcore2025}.
Multivariate normal sampling and bivariate normal CDF calculations 
used the \texttt{mvtnorm} package \cite{mvtnorm_pkg, genz2009mvtnorm}.
Correlation matrices were generated using the \texttt{gencor} package \cite{gencor_pkg}. All figures were produced with 
\texttt{ggplot2} \cite{ggplot2}. Claude Opus 4.6 \cite{anthropic_claude_opus_4_6} was used to review the manuscript draft and optimize code for simulations.

\newpage

\bibliographystyle{plain}
\bibliography{refs}

\newpage 
\appendix

\section{Proofs of Technical Results}\label{sec:proofs}

\subsection{Proof of Theorem \ref{thm:asymptotic_linearity}}

\begin{proof}\label{proof:proof_of_thm:asymptotic_linearity} Proof of Theorem \ref{thm:asymptotic_linearity}

The idea of the proof is simple: show that the linear approximants of $S_i^{(N)}$ of the form $c\sum\limits_{j} \psi_i(X_j)$ have the desired convergence to $\mathcal{N}(0,\Sigma)$ and invoke Slutsky's theorem to conclude that since  $S_i^{(N)} - L_i^{(N)} = o(1)$, $S^{(N)} \to \mathcal{N}(0,\Sigma)$ as well.

Define the leading linear approximations
\begin{align}
L_i^{(N)} & = \frac{1}{\sqrt{n_i^{(N)}}} \sum_{j \in D_i^{(N)}} \psi_i(X_j) \\
& = \frac{1}{\sqrt{n_i^{(N)}}} \sum\limits_{j \leq N} \underbrace{\psi_i(X_j)\mathbf{1}_{j\in D_i^{(N)}}}_{\psi_{i}(X_j)^*}
\end{align}
so that $S_i^{(N)} = L_i^{(N)} + o_p(1)$ by definition asymptotic linearity. We first show that $L^{(N)} := (L_1^{(N)}, \ldots, L_C^{(N)})$ converges jointly in distribution to $\mathcal{N}_C(0, \Sigma)$. By construction the $\psi_{i}(X_j)^*$ are independent but not necessarily identically distributed.

Note that the pairwise covariances are given by

\begin{align}
\Cov(L_i^{(N)}, L_j^{(N)}) 
& = \frac{1}{\sqrt{n_i^{(N)} n_j^{(N)}}} \sum_{k\in D_{i}^{(N)}}\sum_{\ell \in D_j^{(N)}}\Cov(\psi_i(X_k),\psi_j(X_{\ell}))
\end{align}

Terms of the form $\Cov(\psi_i(X_k),\psi_j(X_\ell)) $ with $k\neq \ell$ vanish since the $X_i$ and hence the influence functions $\psi_j(X_i)$ are independent. Terms of the form $\Cov(\psi_i(X_k),\psi_j(X_k))$ are equal to 

\begin{align}
\EE[\psi_i(X_k)\psi_j(X_k)] - \underbrace{\EE[\psi_i(X_k)]}_{=0}\underbrace{\EE[\psi_j(X_k)]}_{=0} = \EE[\psi_i(X_k)\psi_j(X_k)]
\end{align} 
since influence functions have zero mean. Thus

\begin{align}
\Cov(L_i^{(N)}, L_j^{(N)}) = \frac{1}{\sqrt{n_i^{(N)} n_j^{(N)}}} \sum_{k=1}^N (\mathbf{1}_{k \in D_i^{(N)}} \cdot \mathbf{1}_{k \in D_j^{(N)}}) \cdot \EE[\psi_i(X_k)\psi_j(X_k)]
\end{align}
Since the $X_k$ are identically distributed:
\begin{align}
\Cov(L_i^{(N)}, L_j^{(N)})&  = \frac{|D_i^{(N)} \cap D_j^{(N)}|}{\sqrt{n_i^{(N)} n_j^{(N)}}} \cdot \mathbb{E}[\psi_i(X)\psi_j(X)] \\
& \rightarrow \varrho_{ij} \mathbb{E}[\psi_i(X)\psi_j(X)]
\end{align}
This converges to $\Sigma_{ij} =\varrho_{ij}\cdot \mathbb{E}[\psi_i(X)\psi_j(X)]$ as $N \to \infty$. In particular, $\Sigma_{ii} = \mathbb{E}[\psi_i(X)^2] = \sigma_{\psi_i}^2$.

By the Cramer-Wold theorem (\cite{lehmann1999elements} Theorem 5.1.8) it suffices to show that for all $\lambda \in \mathbb{R}^C$ that the sum 
\begin{align}
\lambda\cdot L^{(N)} = \sum \lambda_i L_i^{(N)}
\end{align}
converges. Rewriting, this is equivalent to the convergence of
\begin{align}
\lambda\cdot L^{(N)} & = \sum\limits_{i=1}^{C} \lambda_i L_i^{(N)} \\
& = \sum\limits_{j=1}^N \underbrace{\sum\limits_{i=1}^{C} \frac{\lambda_i}{\sqrt{n_i^{(N)}}} \psi^*_i(X_j)}_{Y_j^{(N)}(\lambda) :=} \\
& = \sum\limits_{j=1}^{N} Y_j^{(N)}(\lambda).
\end{align}

The terms $Y_j^{(N)}(\lambda)$ are the sum of a fixed $C$ of bounded terms converging to $0$ at a rate of $O\left(\frac{1}{\min_{i\leq C}\sqrt{Nr_i(N)}}\right)$ which converges to $0$, so the Lindeberg-Feller condition is satisfied and the Lindeberg-Feller Central Limit Theorem (See for example \cite{van2000asymptotic} Proposition 2.27) ensures convergence.
Hence by the Cramer-Wold theorem the vector statistic 
\begin{align}
\left(L_i^{(N)}\right) \rightarrow \mathcal{N}(0,\Sigma)
\end{align}
in distribution. Thus $(S_i^{(N)}) \rightarrow \mathcal{N}(0,\Sigma)$ by Slutsky's lemma.
\end{proof}

\subsection{Proof of Proposition \ref{prop:correlated_rejection}}

\begin{proof}\label{proof:proof_of_prop:correlated_rejection} (Proof of Proposition \ref{prop:correlated_rejection})

By construction, $R_i\cap R_j$ decomposes as the union of four disjoint events:

\begin{align}
R_i\cap R_j &= \underbrace{\{T_i > c_{\alpha/2}, T_j > c_{\alpha/2}\}}_{:= R_{i,j}(++)} \\ 
& \cup \underbrace{\{T_i < -c_{\alpha/2}, T_j > c_{\alpha/2}\}}_{:= R_{i,j}(-+)} \\
& \cup \underbrace{\{T_i > c_{\alpha/2}, T_j < -c_{\alpha/2}\}}_{:= R_{i,j}(+-)} \\
& \cup \underbrace{\{T_i < -c_{\alpha/2}, T_j < - c_{\alpha/2}\}}_{:= R_{i,j}(--)} 
\end{align}

Since the normal distribution is symmetric about its mean, 
\begin{align} \mathbb{P}(R_{i,j}(++)) = \mathbb{P}(R_{i,j}(--))\\  \mathbb{P}(R_{i,j}(+-)) = \mathbb{P}(R_{i,j}(-+))
\end{align}

Moreover, with $\Phi(x)$ being the cdf of the standard normal and $\Phi_{\rho}(x,y)$ the cdf of the bivariate normal with correlation $\rho$. Then with $\rho_{ij}$ the correlation between $T_i$ and $T_j$ we have (by inclusion exclusion)
\begin{align}
\mathbb{P}(R_{i,j}(++)) = 1-2\Phi(c_{\alpha/2}) + \Phi_{\rho_{ij}}(c_{\alpha/2},c_{\alpha/2}) \\
\mathbb{P}(R_{i,j}(+-)) = 1-2\Phi(c_{\alpha/2}) + \Phi_{-\rho_{ij}}(c_{\alpha/2},c_{\alpha/2}).
\end{align}

Thus
\begin{align}
\mathbb{P}[R_i\cap R_j] = 4 - 8\Phi(c_{\alpha/2}) 
      + 2\Phi_{\rho_{ij}}(c_{\alpha/2}, c_{\alpha/2}) 
      + 2\Phi_{-\rho_{ij}}(c_{\alpha/2}, c_{\alpha/2}).
\end{align}

Observing that $\alpha^2 = (2(1-\Phi(c_{\alpha/2})))^2$ we conclude that 
\begin{align}
\mathbb{P}[R_i\cap R_j] = \alpha^2 
      + 2\left(\Phi_{\rho_{ij}}(c_{\alpha/2}, c_{\alpha/2}) 
              + \Phi_{-\rho_{ij}}(c_{\alpha/2}, c_{\alpha/2}) 
              - 2\Phi(c_{\alpha/2})^2\right)
\end{align}

We now show that $\mathbb{P}(R_i \cap R_j)$ is strictly increasing in $\rho_{ij}$ on $(0,1)$. Let 
\begin{align}
Q(\rho,c_{\alpha/2}) & = \mathbb{P}[R_i\cap R_j] \\
& =  4 - 8\Phi(c_{\alpha/2}) 
      + 2\Phi_{\rho_{ij}}(c_{\alpha/2}, c_{\alpha/2}) 
      + 2\Phi_{-\rho_{ij}}(c_{\alpha/2}, c_{\alpha/2}).
\end{align}

Now we argue that $Q(\rho, c_{\alpha/2})$ is monotonically increasing for $\rho \in (0,1)$. Fixing $c_{\alpha/2}$, we have that

\begin{align}
\frac{dQ}{d\rho} &= 2\left(\frac{d\Phi_{\rho}(c_{\alpha/2},c_{\alpha/2})}{d\rho} - \frac{d\Phi_{-\rho}((c_{\alpha/2},c_{\alpha/2}))}{d\rho}\right) \\
&= \frac{2}{2\pi\sqrt{1-\rho^2}}\left(\underbrace{\exp\left(-\frac{c_{\alpha/2}^2 - 2\rho c_{\alpha/2}^2 + c_{\alpha/2}^2}{2(1-\rho^2)}\right)}_{\frac{d\Phi_{\rho}(c_{\alpha/2},c_{\alpha/2})}{d\rho} \text{ term}} - \underbrace{\exp\left(-\frac{c_{\alpha/2}^2 + 2\rho c_{\alpha/2}^2 + c_{\alpha/2}^2}{2(1-\rho^2)}\right)}_{\frac{d\Phi_{-\rho}(c_{\alpha/2},c_{\alpha/2})}{d\rho} \text{ term}} \right) \\
& = \frac{2}{2\pi\sqrt{1-\rho^2}}\left(\underbrace{\exp\left(-\frac{2c_{\alpha/2}^2(1-\rho)}{2(1-\rho^2)}\right)}_{\frac{d\Phi_{\rho}(c_{\alpha/2},c_{\alpha/2})}{d\rho} \text{ term}} - \underbrace{\exp\left(-\frac{2c_{\alpha/2}^2(1+\rho)}{2(1-\rho^2)}\right)}_{\frac{d\Phi_{-\rho}(c_{\alpha/2},c_{\alpha/2})}{d\rho} \text{ term}} \right) \\
& = \frac{2}{2\pi\sqrt{1-\rho^2}}\left(\underbrace{\exp\left(-\frac{c_{\alpha/2}^2}{1+\rho}\right)}_{\frac{d\Phi_{\rho}(c_{\alpha/2},c_{\alpha/2})}{d\rho} \text{ term}} - \underbrace{\exp\left(-\frac{c_{\alpha/2}^2}{1-\rho}\right)}_{\frac{d\Phi_{-\rho}(c_{\alpha/2},c_{\alpha/2})}{d\rho} \text{ term}} \right).
\end{align}

Since $\rho >0$ we have $\exp\left(-\frac{c_{\alpha/2}^2}{1+\rho}\right) > \exp\left(-\frac{c_{\alpha/2}^2}{1-\rho}\right)$ so that $\frac{dQ}{d\rho} > 0$ as desired.

Now, since $\mathbb{P}[R_i\cap R_j]$ is monotonically increasing in $\rho_{ij}$ for $\rho_{ij} > 0$, to show that $\Cov(\mathbf{1}_{R_i},\mathbf{1}_{R_j}) = \mathbb{P}[R_i\cap R_j] - \alpha^2 > 0$ if $\rho_{ij} > 0$ we need only show that $\mathbb{P}[R_i\cap R_j] = \alpha^2$ when they are independent. But this is true since each indicator function $\mathbf{1}_{R_k}$ has probability $\alpha$.
\end{proof}

\subsection{Proof of Proposition \ref{prop:asymptotic_optimality_of_splitting}}

\begin{proof} \label{proof:proof_of_prop:asymptotic_optimality_of_splitting} Proof of \ref{prop:asymptotic_optimality_of_splitting}
The asymptotic relative efficiency of $ARE(T^f_i,T_i^s)$ for any splitting procedure is given by the squared ratio of the standard deviations $\left(\frac{\sigma_{T^f_i}}{\sigma_{T^s_i}}\right)^2$ (by Theorem 14.7 in \cite{van2000asymptotic}). By construction, $\sigma_{T^s_i} = \frac{\sigma_{T^f_i}}{\sqrt{\frac{|D_i|}{N}}}$ so $ARE(T^f_i,T^s_i) = \frac{|D_i|}{N}$.

Since $\sum\limits_{i=1}^C |D_i| = N$, 
\begin{align}
\min \frac{D_i}{N} \leq \frac{1}{C}
\end{align}
and so 

\begin{align}
\max\min_{i} ARE(T^f_i,T_i^s) = \frac{1}{C}.
\end{align}

By construction $C$-uniform data splitting obtains this value of ARE across all tests $T_i$.
\end{proof}

\subsection{Proof of Proposition \ref{cor:variance_under_random_subsampling}}

\begin{proof}\label{proof:proof_of_cor_variance_under_random_subsampling}
Proof of Proposition \ref{cor:variance_under_random_subsampling}
By linearity of expectation and the assumption that the tests $T_i$ are exactly level $\alpha$, 

\begin{align}
\mathbb{E}_S[E] = C\alpha
\end{align}
for any subsampling procedure. Thus, by the law of total variance 
\begin{align}
\VV(E) & = \EE_S[\VV(E|S)] + \underbrace{\mathbb{V}_S(\mathbb{E}(E))}_{ =0 \text{ since }\mathbb{E}(E) \equiv C\alpha} \\
&= C\alpha(1-\alpha) + 2\sum\limits_{i<j} \EE_S[R(\rho_{ij}, c_{\alpha/2})].
\end{align}

For each pair $i\neq j$ we condition on the relation between $|\rho_{ij}|$ and $\rho_0$:
\begin{align}
\EE_S[R(\rho_{ij},c_{\alpha/2})] 
&= \EE[R(\rho_{ij},c_{\alpha/2})\mathbf{1}_{|\rho_{ij}|<\rho_0}] 
 + \EE[R(\rho_{ij},c_{\alpha/2})\mathbf{1}_{|\rho_{ij}|\geq\rho_0}].
\end{align}
By the monotonicity of $R$ (Proposition 
\ref{prop:correlated_rejection}) we have
$R(\rho_{ij},c_{\alpha/2}) \leq R(\rho_0,c_{\alpha/2})$. In the worst case $|\rho_{ij}|\geq \rho_0$, monotonicity yields
$R(\rho_{ij},c_{\alpha/2}) \leq R(1,c_{\alpha/2}) = \alpha(1-\alpha)$ since when 
$\rho_{ij} = 1$ we have 
$\mathbb{P}(R_i \cap R_j) = \alpha$. Thus
\begin{align}
\EE[R(\rho_{ij})] 
\leq R(\rho_0,c_{\alpha/2}) + \alpha(1-\alpha)\, w.
\end{align}
Summing over each of the $\binom{C}{2}$ distinct pairs yields the desired sum.

Substituting $w = \frac{\gamma}{\binom{C}{2}}$ yields 
\begin{align}
\mathbb{E}_S[\mathbb{V}(E)] \leq (C+2\gamma)\alpha(1-\alpha) + C(C-1)R(\rho_0,c_{\alpha/2})
\end{align}
\end{proof}

\subsection{Proof of Proposition \ref{prop:union_bound_performant}}

\begin{proof}\label{proof:prop_union_bound_performant} This is the proof of proposition \ref{prop:union_bound_performant}.
Let $S$ be egalitarian. 
For any finite sample $N$, the pairwise correlation between $T_i$ and $T_j$ is bounded above by 
\begin{align}
|\rho_{ij}| & \leq \frac{\omega_{ij}(N)}{\sqrt{r(N)^2}} \\ 
& =\frac{\omega_{ij}(N)}{r(N)} \\
& = \frac{N\omega_{ij}}{Nr(N)} \\
& = \frac{|D_i\cap D_j|}{Nr(N)}.
\end{align}
so that 
\begin{align}
\sum\limits\limits_{i\neq j \leq C}\mathbb{P}\left( |\rho_{ij}| \geq \rho_0 \right) \leq \sum\limits\limits_{i\neq j \leq C} \mathbb{P}\left( |D_i\cap D_j| \geq Nr(N)\rho_0 \right)
\end{align}

By egalitarianism of $S$, in particular the subsets $D_i$ and $D_j$ are selected uniformly from the set of subsets of $D$ of size $Nr(N)$, for any pairs of $i\neq j$ 
\begin{align}
    |D_i\cap D_j| \sim \text{Hypergeometric}(N,Nr(N), Nr(N)).
\end{align}
as desired.
\end{proof}

\subsection{Proof of Proposition \ref{prop:subsampling_tail_bound}}

\begin{proof}\label{proof:proof_of_prop_subsampling_tail_bound}
First, item \ref{item:expectation} follows since the subsamples $D_i$ and $D_j$ are independent and so the probability 
\begin{align}
\mathbb{P}[x\in D_i\cap D_j] = \mathbb{P}[x\in D_i]\mathbb{P}[x\in D_j] = r(N)^2.
\end{align}
so 
\begin{align}
\mathbb{E}[|D_i\cap D_j|] = N r(N)^2
\end{align}

Item \ref{item:chernoff_prep} follows directly from item \ref{item:expectation}. 

Next, item \ref{item:chernoff_bound} is an immediate consequence of the variant of the Chernoff bound for hypergeometric random variables in \cite{mulzer2018chernoff} Theorem 5.3 and the proof of Corollary 4.2 from Theorem 2.1. \ref{item:expectation}.

Finally, clause \ref{item:hoeffding} follows from the Hoeffding bound \cite{hoeffding1963probability} v, which in our context says

\begin{align}
 \mathbb{P}\left(|D_1\cap D_2| \geq Nr(N)\rho_0\right)  & \leq e^{-2\frac{\delta^2\mathbb{E}[|D_1\cap D_2|]^2}{Nr(N)}} \\
 & = e^{-2\frac{\delta^2 N^2 r(N)^4}{Nr(N)}} \\
 & = e^{-2\delta^2 N r(N)^3}.
\end{align}

Finally, the Markov bound applies via

\begin{align}
\mathbb{P}\left(|D_1\cap D_2| \geq Nr(N)\rho_0\right) &\leq \frac{\mathbb{E}(|D_1\cap D_2|)}{Nr(N)\rho_0}\\
& = \frac{Nr(N)^2}{Nr(N)\rho_0} \\
& = \frac{r(N)}{\rho_0}.
\end{align}
\end{proof}

\subsection{Proof of Proposition \ref{prop:egalitarian_evr_inversion}}

\begin{proof}\label{proof:proof_of_prop_egalitarian_evr_inversion}
First, constraint $EVR(S) \leq 1+\delta$ 
and rearranging the terms appearing in Corollary \ref{cor:bounding_variance} to obtain

\begin{align}
C& \leq \frac{(\delta-1)\alpha(1-\alpha)}{\alpha(1-\alpha)p_{mixed}(N,r(N),\rho_0) + R(\rho_0,c_{\alpha/2})} 
\end{align}.

Now observe that since $p_{mixed}  = O(e^{\Theta(-\sqrt{N}})$ and $R(\rho_0,c_{\alpha/2}) \leq \alpha(1-\alpha)\rho_0^2$ that for sufficiently large $N$ with $\rho_0 \approx r(N)$

\begin{align}
\frac{(\delta-1)\alpha(1-\alpha)}{\alpha(1-\alpha)p_{mixed}(N,r(N),\rho_0) + R(\rho_0,c_{\alpha/2})} \approx \frac{\delta-1}{r(N)^2}.
\end{align}

Thus, any $C$ lower than this will ensure $\mathbb{E}_S[\mathbb{V}(E)] \leq 1+\delta$.
\end{proof}

\section{Contribution of Higher Cumulants for Linear Statistics}\label{sec:cumulants}

The asymptotic results above Theorem \ref{thm:asymptotic_linearity} should be couched in an analysis of how higher-order overlap affects exact linear statistics non-asymptotically. 

\textit{Non-asymptotically} the higher-order dependence of the linear test statistics $T_i$ depend on the higher-order cumulants in an easy way. Assume that the $X_i$ have mean $0$, variance $1$ are iid. Let $c_m := \kappa_m(X)$ be the $m^{\text{th}}$ cumulant of the distribution of $X$.  Let $T_i = \sum\limits_{k} a_{i,j} X_j$ be a linear test statistic. Then since cumulants are multilinear (see, e.g., \cite{brillinger1992moments}), 
\begin{align}
\kappa_j(T_{i_1},\dots, T_{i_j}) = c_j \sum\limits_{k=1}^{|D|} a_{i_1,k} \cdots a_{i_j,k} .
\end{align}

For the test statistic $T_i = \frac{\sqrt{|D_i|}}{\sigma} \left(\sum\limits_{j\in D_i} X_j - \mu \right)$ this reduces to 
\begin{align}
\kappa_j(T_{i_1},\dots, T_{i_j}) = c_j \frac{|D_{i_1} \cap \cdots \cap D_{i_j}|}{\sqrt{|D_{i_1}| \cdots |D_{i_j}|}}
\end{align}

Assuming that each $D_i$ is a fraction $r$ of the dataset $D$ and are drawn independently and uniformly, this reduces to 
\[ \frac{|D_{i_1} \cap \cdots \cap D_{i_j}|}{\sqrt{|D_{i_1}| \cdots |D_{i_j}|}} \approx \frac{|D|\times r^j}{\sqrt{|D|^j}\times \sqrt{r^j}} = N^{1-j/2} r^{j/2}.\]

Thus, the higher cumulants tend to vanish polynomially in the parameters $r$ and $N$ for linear statistics.

\section{Sampled Correlation Matrices for the Families of Univariate Regressions Example}\label{sec:simulated_correlation_matrices}

This appendix reports the realized correlation matrices $\Sigma_X$ (among $X_1,\ldots,X_{10}$) and $\Sigma_Y$ (among $Y_1,\ldots,Y_{10}$) used in the families of univariate regressions simulation. Matrices are generated using the \texttt{gencor} package, which constructs positive-definite correlation matrices by calibrating the standard deviations of underlying normal random variables. All matrices are PSD by construction. Seeds are fixed for reproducibility.
 
\medskip
\noindent\textbf{Notation.} $\pm[a,b]$ denotes correlations sampled from $[-b,-a]\cup[a,b]$ with random signs. ``$[a,b]$ pos'' denotes all-positive correlations in $[a,b]$. $\lambda_{\min}$ is the smallest eigenvalue; $\overline{|\rho|}$ is the mean absolute off-diagonal entry.
 
\subsection*{$I = [-1,1]$}
 
\begin{equation*}
\renewcommand{\arraystretch}{0.85}
\Sigma_X = \scriptsize\begin{pmatrix}
 1.00 &  0.04 & -0.10 & -0.02 & -0.03 &  0.01 &  0.02 &  0.02 & -0.04 &  0.04 \\
 0.04 &  1.00 & -0.11 & -0.09 & -0.04 & -0.04 &  0.00 & -0.01 & -0.04 & -0.03 \\
-0.10 & -0.11 &  1.00 &  0.39 &  0.17 &  0.18 &  0.08 & -0.08 &  0.20 &  0.16 \\
-0.02 & -0.09 &  0.39 &  1.00 &  0.07 &  0.04 &  0.06 & -0.03 &  0.15 &  0.02 \\
-0.03 & -0.04 &  0.17 &  0.07 &  1.00 &  0.05 &  0.02 & -0.01 &  0.07 &  0.08 \\
 0.01 & -0.04 &  0.18 &  0.04 &  0.05 &  1.00 &  0.01 & -0.01 &  0.00 & -0.02 \\
 0.02 &  0.00 &  0.08 &  0.06 &  0.02 &  0.01 &  1.00 & -0.04 &  0.02 &  0.08 \\
 0.02 & -0.01 & -0.08 & -0.03 & -0.01 & -0.01 & -0.04 &  1.00 &  0.02 &  0.02 \\
-0.04 & -0.04 &  0.20 &  0.15 &  0.07 &  0.00 &  0.02 &  0.02 &  1.00 &  0.07 \\
 0.04 & -0.03 &  0.16 &  0.02 &  0.08 & -0.02 &  0.08 &  0.02 &  0.07 &  1.00
\end{pmatrix}
\end{equation*}
 
\begin{equation*}
\renewcommand{\arraystretch}{0.85}
\Sigma_Y = \scriptsize\begin{pmatrix}
 1.00 &  0.08 &  0.04 & -0.06 &  0.01 &  0.11 & -0.03 &  0.01 & -0.02 &  0.00 \\
 0.08 &  1.00 &  0.17 & -0.09 &  0.07 &  0.26 & -0.05 &  0.00 &  0.03 &  0.06 \\
 0.04 &  0.17 &  1.00 & -0.04 &  0.06 &  0.39 & -0.06 & -0.08 &  0.09 &  0.10 \\
-0.06 & -0.09 & -0.04 &  1.00 & -0.02 & -0.12 & -0.01 & -0.03 & -0.06 &  0.02 \\
 0.01 &  0.07 &  0.06 & -0.02 &  1.00 &  0.07 &  0.00 & -0.02 &  0.00 & -0.02 \\
 0.11 &  0.26 &  0.39 & -0.12 &  0.07 &  1.00 & -0.11 & -0.09 &  0.08 &  0.13 \\
-0.03 & -0.05 & -0.06 & -0.01 &  0.00 & -0.11 &  1.00 &  0.01 & -0.05 & -0.06 \\
 0.01 &  0.00 & -0.08 & -0.03 & -0.02 & -0.09 &  0.01 &  1.00 & -0.02 &  0.01 \\
-0.02 &  0.03 &  0.09 & -0.06 &  0.00 &  0.08 & -0.05 & -0.02 &  1.00 & -0.05 \\
 0.00 &  0.06 &  0.10 &  0.02 & -0.02 &  0.13 & -0.06 &  0.01 & -0.05 &  1.00
\end{pmatrix}
\end{equation*}
 
\begin{center}\small
\begin{tabular}{lccc}
\hline
Matrix & $\overline{|\rho|}$ & $\lambda_{\min}$ \\
\hline
$\Sigma_X$ & 0.063 & 0.535 \\
$\Sigma_Y$ & 0.065 & 0.581 \\
\hline
\end{tabular}
\end{center}
 
\smallskip
 
\subsection*{$I = \pm[0.33,\,1]$}
 
\begin{equation*}
\renewcommand{\arraystretch}{0.85}
\Sigma_X = \scriptsize\begin{pmatrix}
 1.00 &  0.46 & -0.67 & -0.63 & -0.55 & -0.48 & -0.39 &  0.45 & -0.56 & -0.42 \\
 0.46 &  1.00 & -0.66 & -0.64 & -0.55 & -0.49 & -0.40 &  0.43 & -0.56 & -0.45 \\
-0.67 & -0.66 &  1.00 &  0.94 &  0.82 &  0.73 &  0.61 & -0.67 &  0.83 &  0.67 \\
-0.63 & -0.64 &  0.94 &  1.00 &  0.77 &  0.68 &  0.58 & -0.63 &  0.80 &  0.62 \\
-0.55 & -0.55 &  0.82 &  0.77 &  1.00 &  0.61 &  0.50 & -0.54 &  0.69 &  0.57 \\
-0.48 & -0.49 &  0.73 &  0.68 &  0.61 &  1.00 &  0.45 & -0.49 &  0.60 &  0.47 \\
-0.39 & -0.40 &  0.61 &  0.58 &  0.50 &  0.45 &  1.00 & -0.43 &  0.51 &  0.45 \\
 0.45 &  0.43 & -0.67 & -0.63 & -0.54 & -0.49 & -0.43 &  1.00 & -0.54 & -0.43 \\
-0.56 & -0.56 &  0.83 &  0.80 &  0.69 &  0.60 &  0.51 & -0.54 &  1.00 &  0.57 \\
-0.42 & -0.45 &  0.67 &  0.62 &  0.57 &  0.47 &  0.45 & -0.43 &  0.57 &  1.00
\end{pmatrix}
\end{equation*}
 
\begin{equation*}
\renewcommand{\arraystretch}{0.85}
\Sigma_Y = \scriptsize\begin{pmatrix}
 1.00 &  0.74 &  0.76 & -0.57 &  0.49 &  0.77 & -0.63 & -0.45 &  0.46 &  0.59 \\
 0.74 &  1.00 &  0.91 & -0.67 &  0.60 &  0.92 & -0.74 & -0.54 &  0.57 &  0.71 \\
 0.76 &  0.91 &  1.00 & -0.68 &  0.62 &  0.96 & -0.77 & -0.58 &  0.60 &  0.74 \\
-0.57 & -0.67 & -0.68 &  1.00 & -0.45 & -0.70 &  0.55 &  0.39 & -0.46 & -0.52 \\
 0.49 &  0.60 &  0.62 & -0.45 &  1.00 &  0.62 & -0.50 & -0.38 &  0.38 &  0.46 \\
 0.77 &  0.92 &  0.96 & -0.70 &  0.62 &  1.00 & -0.79 & -0.58 &  0.61 &  0.75 \\
-0.63 & -0.74 & -0.77 &  0.55 & -0.50 & -0.79 &  1.00 &  0.47 & -0.50 & -0.62 \\
-0.45 & -0.54 & -0.58 &  0.39 & -0.38 & -0.58 &  0.47 &  1.00 & -0.37 & -0.43 \\
 0.46 &  0.57 &  0.60 & -0.46 &  0.38 &  0.61 & -0.50 & -0.37 &  1.00 &  0.43 \\
 0.59 &  0.71 &  0.74 & -0.52 &  0.46 &  0.75 & -0.62 & -0.43 &  0.43 &  1.00
\end{pmatrix}
\end{equation*}
 
\begin{center}\small
\begin{tabular}{lccc}
\hline
Matrix & $\overline{|\rho|}$ & $\lambda_{\min}$ \\
\hline
$\Sigma_X$ & 0.578 & 0.042 \\
$\Sigma_Y$ & 0.600 & 0.035 \\
\hline
\end{tabular}
\end{center}
 
\subsection*{$I = \pm[0.67,\,1]$}
 
\begin{equation*}
\renewcommand{\arraystretch}{0.85}
\Sigma_X = \scriptsize\begin{pmatrix}
 1.00 &  0.77 & -0.88 & -0.86 & -0.83 & -0.79 & -0.73 &  0.77 & -0.84 & -0.75 \\
 0.77 &  1.00 & -0.87 & -0.86 & -0.83 & -0.79 & -0.73 &  0.76 & -0.83 & -0.76 \\
-0.88 & -0.87 &  1.00 &  0.98 &  0.94 &  0.90 &  0.84 & -0.88 &  0.95 &  0.87 \\
-0.86 & -0.86 &  0.98 &  1.00 &  0.93 &  0.89 &  0.83 & -0.86 &  0.94 &  0.85 \\
-0.83 & -0.83 &  0.94 &  0.93 &  1.00 &  0.85 &  0.80 & -0.83 &  0.90 &  0.83 \\
-0.79 & -0.79 &  0.90 &  0.89 &  0.85 &  1.00 &  0.76 & -0.79 &  0.85 &  0.78 \\
-0.73 & -0.73 &  0.84 &  0.83 &  0.80 &  0.76 &  1.00 & -0.75 &  0.80 &  0.75 \\
 0.77 &  0.76 & -0.88 & -0.86 & -0.83 & -0.79 & -0.75 &  1.00 & -0.83 & -0.75 \\
-0.84 & -0.83 &  0.95 &  0.94 &  0.90 &  0.85 &  0.80 & -0.83 &  1.00 &  0.83 \\
-0.75 & -0.76 &  0.87 &  0.85 &  0.83 &  0.78 &  0.75 & -0.75 &  0.83 &  1.00
\end{pmatrix}
\end{equation*}
 
\begin{equation*}
\renewcommand{\arraystretch}{0.85}
\Sigma_Y = \scriptsize\begin{pmatrix}
 1.00 &  0.92 &  0.92 & -0.84 &  0.79 &  0.93 & -0.87 & -0.76 &  0.78 &  0.85 \\
 0.92 &  1.00 &  0.97 & -0.88 &  0.84 &  0.98 & -0.92 & -0.80 &  0.83 &  0.90 \\
 0.92 &  0.97 &  1.00 & -0.88 &  0.84 &  0.99 & -0.93 & -0.82 &  0.84 &  0.91 \\
-0.84 & -0.88 & -0.88 &  1.00 & -0.76 & -0.89 &  0.83 &  0.72 & -0.76 & -0.81 \\
 0.79 &  0.84 &  0.84 & -0.76 &  1.00 &  0.85 & -0.79 & -0.70 &  0.71 &  0.77 \\
 0.93 &  0.98 &  0.99 & -0.89 &  0.85 &  1.00 & -0.93 & -0.82 &  0.84 &  0.92 \\
-0.87 & -0.92 & -0.93 &  0.83 & -0.79 & -0.93 &  1.00 &  0.77 & -0.79 & -0.86 \\
-0.76 & -0.80 & -0.82 &  0.72 & -0.70 & -0.82 &  0.77 &  1.00 & -0.70 & -0.75 \\
 0.78 &  0.83 &  0.84 & -0.76 &  0.71 &  0.84 & -0.79 & -0.70 &  1.00 &  0.76 \\
 0.85 &  0.90 &  0.91 & -0.81 &  0.77 &  0.92 & -0.86 & -0.75 &  0.76 &  1.00
\end{pmatrix}
\end{equation*}
 
\begin{center}\small
\begin{tabular}{lccc}
\hline
Matrix & $\overline{|\rho|}$ & $\lambda_{\min}$ \\
\hline
$\Sigma_X$ & 0.832 & 0.012 \\
$\Sigma_Y$ & 0.839 & 0.010 \\
\hline
\end{tabular}
\end{center}
 
\subsection*{$I = \pm[0.33,\,0.67]$}
 
\begin{equation*}
\renewcommand{\arraystretch}{0.85}
\Sigma_X = \scriptsize\begin{pmatrix}
 1.00 &  0.38 & -0.49 & -0.43 & -0.40 & -0.37 & -0.32 &  0.36 & -0.42 & -0.34 \\
 0.38 &  1.00 & -0.50 & -0.48 & -0.42 & -0.41 & -0.35 &  0.34 & -0.44 & -0.39 \\
-0.49 & -0.50 &  1.00 &  0.62 &  0.54 &  0.54 &  0.48 & -0.46 &  0.56 &  0.52 \\
-0.43 & -0.48 &  0.62 &  1.00 &  0.49 &  0.47 &  0.44 & -0.43 &  0.56 &  0.45 \\
-0.40 & -0.42 &  0.54 &  0.49 &  1.00 &  0.45 &  0.39 & -0.39 &  0.48 &  0.45 \\
-0.37 & -0.41 &  0.54 &  0.47 &  0.45 &  1.00 &  0.38 & -0.38 &  0.44 &  0.39 \\
-0.32 & -0.35 &  0.48 &  0.44 &  0.39 &  0.38 &  1.00 & -0.36 &  0.40 &  0.41 \\
 0.36 &  0.34 & -0.46 & -0.43 & -0.39 & -0.38 & -0.36 &  1.00 & -0.39 & -0.35 \\
-0.42 & -0.44 &  0.56 &  0.56 &  0.48 &  0.44 &  0.40 & -0.39 &  1.00 &  0.46 \\
-0.34 & -0.39 &  0.52 &  0.45 &  0.45 &  0.39 &  0.41 & -0.35 &  0.46 &  1.00
\end{pmatrix}
\end{equation*}
 
\begin{equation*}
\renewcommand{\arraystretch}{0.85}
\Sigma_Y = \scriptsize\begin{pmatrix}
 1.00 &  0.51 &  0.50 & -0.44 &  0.40 &  0.55 & -0.45 & -0.36 &  0.36 &  0.43 \\
 0.51 &  1.00 &  0.57 & -0.49 &  0.47 &  0.62 & -0.50 & -0.41 &  0.44 &  0.50 \\
 0.50 &  0.57 &  1.00 & -0.47 &  0.47 &  0.63 & -0.51 & -0.47 &  0.48 &  0.53 \\
-0.44 & -0.49 & -0.47 &  1.00 & -0.40 & -0.53 &  0.41 &  0.34 & -0.41 & -0.41 \\
 0.40 &  0.47 &  0.47 & -0.40 &  1.00 &  0.48 & -0.40 & -0.37 &  0.35 &  0.39 \\
 0.55 &  0.62 &  0.63 & -0.53 &  0.48 &  1.00 & -0.55 & -0.49 &  0.49 &  0.55 \\
-0.45 & -0.50 & -0.51 &  0.41 & -0.40 & -0.55 &  1.00 &  0.39 & -0.42 & -0.47 \\
-0.36 & -0.41 & -0.47 &  0.34 & -0.37 & -0.49 &  0.39 &  1.00 & -0.35 & -0.37 \\
 0.36 &  0.44 &  0.48 & -0.41 &  0.35 &  0.49 & -0.42 & -0.35 &  1.00 &  0.36 \\
 0.43 &  0.50 &  0.53 & -0.41 &  0.39 &  0.55 & -0.47 & -0.37 &  0.36 &  1.00
\end{pmatrix}
\end{equation*}
 
\begin{center}\small
\begin{tabular}{lccc}
\hline
Matrix & $\overline{|\rho|}$ & $\lambda_{\min}$ \\
\hline
$\Sigma_X$ & 0.434 & 0.355 \\
$\Sigma_Y$ & 0.455 & 0.347 \\
\hline
\end{tabular}
\end{center}
 
\subsection*{$ I =[0.33,\,0.67]$}
 
\begin{equation*}
\renewcommand{\arraystretch}{0.85}
\Sigma_X = \scriptsize\begin{pmatrix}
1.00 & 0.41 & 0.51 & 0.48 & 0.41 & 0.43 & 0.38 & 0.40 & 0.43 & 0.43 \\
0.41 & 1.00 & 0.51 & 0.44 & 0.40 & 0.40 & 0.37 & 0.38 & 0.42 & 0.39 \\
0.51 & 0.51 & 1.00 & 0.62 & 0.54 & 0.54 & 0.48 & 0.54 & 0.56 & 0.52 \\
0.48 & 0.44 & 0.62 & 1.00 & 0.49 & 0.47 & 0.44 & 0.48 & 0.56 & 0.45 \\
0.41 & 0.40 & 0.54 & 0.49 & 1.00 & 0.45 & 0.39 & 0.43 & 0.48 & 0.45 \\
0.43 & 0.40 & 0.54 & 0.47 & 0.45 & 1.00 & 0.38 & 0.42 & 0.44 & 0.39 \\
0.38 & 0.37 & 0.48 & 0.44 & 0.39 & 0.38 & 1.00 & 0.34 & 0.40 & 0.41 \\
0.40 & 0.38 & 0.54 & 0.48 & 0.43 & 0.42 & 0.34 & 1.00 & 0.46 & 0.42 \\
0.43 & 0.42 & 0.56 & 0.56 & 0.48 & 0.44 & 0.40 & 0.46 & 1.00 & 0.46 \\
0.43 & 0.39 & 0.52 & 0.45 & 0.45 & 0.39 & 0.41 & 0.42 & 0.46 & 1.00
\end{pmatrix}
\end{equation*}
 
\begin{equation*}
\renewcommand{\arraystretch}{0.85}
\Sigma_Y = \scriptsize\begin{pmatrix}
1.00 & 0.51 & 0.50 & 0.38 & 0.40 & 0.55 & 0.44 & 0.40 & 0.36 & 0.43 \\
0.51 & 1.00 & 0.57 & 0.45 & 0.47 & 0.62 & 0.52 & 0.46 & 0.44 & 0.50 \\
0.50 & 0.57 & 1.00 & 0.52 & 0.47 & 0.63 & 0.57 & 0.45 & 0.48 & 0.53 \\
0.38 & 0.45 & 0.52 & 1.00 & 0.38 & 0.50 & 0.42 & 0.34 & 0.34 & 0.45 \\
0.40 & 0.47 & 0.47 & 0.38 & 1.00 & 0.48 & 0.44 & 0.36 & 0.35 & 0.39 \\
0.55 & 0.62 & 0.63 & 0.50 & 0.48 & 1.00 & 0.57 & 0.47 & 0.49 & 0.55 \\
0.44 & 0.52 & 0.57 & 0.42 & 0.44 & 0.57 & 1.00 & 0.41 & 0.39 & 0.45 \\
0.40 & 0.46 & 0.45 & 0.34 & 0.36 & 0.47 & 0.41 & 1.00 & 0.34 & 0.42 \\
0.36 & 0.44 & 0.48 & 0.34 & 0.35 & 0.49 & 0.39 & 0.34 & 1.00 & 0.36 \\
0.43 & 0.50 & 0.53 & 0.45 & 0.39 & 0.55 & 0.45 & 0.42 & 0.36 & 1.00
\end{pmatrix}
\end{equation*}
 
\begin{center}\small
\begin{tabular}{lccc}
\hline
Matrix & $\overline{|\rho|}$ & $\lambda_{\min}$ \\
\hline
$\Sigma_X$ & 0.449 & 0.350 \\
$\Sigma_Y$ & 0.456 & 0.352 \\
\hline
\end{tabular}
\end{center}
 
\subsection*{$I = [0.67,\,1]$}
 
\begin{equation*}
\renewcommand{\arraystretch}{0.85}
\Sigma_X = \scriptsize\begin{pmatrix}
1.00 & 0.78 & 0.89 & 0.87 & 0.83 & 0.81 & 0.75 & 0.79 & 0.84 & 0.78 \\
0.78 & 1.00 & 0.88 & 0.86 & 0.82 & 0.79 & 0.74 & 0.77 & 0.83 & 0.76 \\
0.89 & 0.88 & 1.00 & 0.98 & 0.94 & 0.90 & 0.84 & 0.89 & 0.95 & 0.87 \\
0.87 & 0.86 & 0.98 & 1.00 & 0.93 & 0.89 & 0.83 & 0.87 & 0.94 & 0.85 \\
0.83 & 0.82 & 0.94 & 0.93 & 1.00 & 0.85 & 0.80 & 0.84 & 0.90 & 0.83 \\
0.81 & 0.79 & 0.90 & 0.89 & 0.85 & 1.00 & 0.76 & 0.80 & 0.85 & 0.78 \\
0.75 & 0.74 & 0.84 & 0.83 & 0.80 & 0.76 & 1.00 & 0.74 & 0.80 & 0.75 \\
0.79 & 0.77 & 0.89 & 0.87 & 0.84 & 0.80 & 0.74 & 1.00 & 0.85 & 0.78 \\
0.84 & 0.83 & 0.95 & 0.94 & 0.90 & 0.85 & 0.80 & 0.85 & 1.00 & 0.83 \\
0.78 & 0.76 & 0.87 & 0.85 & 0.83 & 0.78 & 0.75 & 0.78 & 0.83 & 1.00
\end{pmatrix}
\end{equation*}
 
\begin{equation*}
\renewcommand{\arraystretch}{0.85}
\Sigma_Y = \scriptsize\begin{pmatrix}
1.00 & 0.92 & 0.92 & 0.82 & 0.79 & 0.93 & 0.87 & 0.78 & 0.78 & 0.85 \\
0.92 & 1.00 & 0.97 & 0.87 & 0.84 & 0.98 & 0.92 & 0.82 & 0.83 & 0.90 \\
0.92 & 0.97 & 1.00 & 0.89 & 0.84 & 0.99 & 0.93 & 0.82 & 0.84 & 0.91 \\
0.82 & 0.87 & 0.89 & 1.00 & 0.76 & 0.89 & 0.83 & 0.73 & 0.74 & 0.83 \\
0.79 & 0.84 & 0.84 & 0.76 & 1.00 & 0.85 & 0.80 & 0.70 & 0.71 & 0.77 \\
0.93 & 0.98 & 0.99 & 0.89 & 0.85 & 1.00 & 0.93 & 0.82 & 0.84 & 0.92 \\
0.87 & 0.92 & 0.93 & 0.83 & 0.80 & 0.93 & 1.00 & 0.78 & 0.78 & 0.86 \\
0.78 & 0.82 & 0.82 & 0.73 & 0.70 & 0.82 & 0.78 & 1.00 & 0.69 & 0.77 \\
0.78 & 0.83 & 0.84 & 0.74 & 0.71 & 0.84 & 0.78 & 0.69 & 1.00 & 0.76 \\
0.85 & 0.90 & 0.91 & 0.83 & 0.77 & 0.92 & 0.86 & 0.77 & 0.76 & 1.00
\end{pmatrix}
\end{equation*}
 
\begin{center}\small
\begin{tabular}{lccc}
\hline
Matrix & $\overline{|\rho|}$ & $\lambda_{\min}$ \\
\hline
$\Sigma_X$ & 0.836 & 0.011 \\
$\Sigma_Y$ & 0.839 & 0.010 \\
\hline
\end{tabular}
\end{center}

\end{document}